\documentclass[]{amsart}

\usepackage{amscd,amsthm,amssymb,amsfonts,amsmath,euscript}
\usepackage[english]{babel}
\usepackage{xcolor}
\usepackage{comment}
\usepackage[all]{xy}
\usepackage[margin=1.25in]{geometry}
\usepackage{hyperref}




\newtheorem{thm}{Theorem}[section]

\newtheorem{lemma}[thm]{Lemma}

\newtheorem{proposition}[thm]{Proposition}

\theoremstyle{definition}
\newtheorem{remark}[thm]{Remark}

\newtheorem{example}[thm]{Example}

\numberwithin{equation}{section}
\newcommand{\nc}{\newcommand}


\def\makeop#1{\expandafter\def\csname#1\endcsname
  {\mathop{\rm #1}\nolimits}\ignorespaces}
\makeop{Hom}   \makeop{End}   \makeop{Aut}   \makeop{Isom}  \makeop{Pic} 
\makeop{Gal}   \makeop{ord}   \makeop{Char}  \makeop{Div}   \makeop{Lie} 
\makeop{PGL}   \makeop{Corr}  \makeop{PSL}   \makeop{sgn}   \makeop{Spf}
\makeop{Spec}  \makeop{Tr}    \makeop{Nr}    \makeop{Fr}    \makeop{disc}
\makeop{Proj}  \makeop{supp}  \makeop{ker}   \makeop{im}    \makeop{dom}
\makeop{coker} \makeop{Stab}  \makeop{SO}    \makeop{SL}    \makeop{SL}
\makeop{Cl}    \makeop{cond}  \makeop{Br}    \makeop{inv}   \makeop{rank}
\makeop{id}    \makeop{Fil}   \makeop{Frac}  \makeop{GL}    \makeop{SU}
\makeop{Nrd}   \makeop{Sp}    \makeop{Tr}    \makeop{Trd}   \makeop{diag}
\makeop{Res}   \makeop{ind}   \makeop{depth} \makeop{Tr}    \makeop{st}
\makeop{Ad}    \makeop{Int}   \makeop{tr}    \makeop{Sym}   \makeop{can}
\makeop{length}\makeop{SO}    \makeop{torsion} \makeop{GSp} \makeop{Ker}
\makeop{Adm}   \makeop{Mat}
\def\makebb#1{\expandafter\def
  \csname bb#1\endcsname{{\mathbb{#1}}}\ignorespaces}
\def\makebf#1{\expandafter\def\csname bf#1\endcsname{{\bf
      #1}}\ignorespaces} 
\def\makegr#1{\expandafter\def
  \csname gr#1\endcsname{{\mathfrak{#1}}}\ignorespaces}
\def\makescr#1{\expandafter\def
  \csname scr#1\endcsname{{\EuScript{#1}}}\ignorespaces}
\def\makecal#1{\expandafter\def\csname cal#1\endcsname{{\mathcal
      #1}}\ignorespaces} 

\def\doLetters#1{#1A #1B #1C #1D #1E #1F #1G #1H #1I #1J #1K #1L #1M
                 #1N #1O #1P #1Q #1R #1S #1T #1U #1V #1W #1X #1Y #1Z}
\def\doletters#1{#1a #1b #1c #1d #1e #1f #1g #1h #1i #1j #1k #1l #1m
                 #1n #1o #1p #1q #1r #1s #1t #1u #1v #1w #1x #1y #1z}
\doLetters\makebb   \doLetters\makecal  \doLetters\makebf
\doLetters\makescr 
\doletters\makebf   \doLetters\makegr   \doletters\makegr

    \def\setminus{\smallsetminus}

\normalsize

\makeop{Bl}

\def\Fq{{\bbF}_q}

\newcommand{\Z}{\mathbb Z}
\newcommand{\Q}{\mathbb Q}
\newcommand{\R}{\mathbb R}

\newcommand{\A}{\mathbb A}    
\newcommand{\F}{\mathbb F}



\newcommand{\pr}{\indent }
\newcommand{\co}{\mathcal{O}}

\newcommand{\<}{\langle}   
\renewcommand{\>}{\rangle} 


\nc{\embed}{\hookrightarrow}




\nc{\ol}{\overline}
\nc{\wt}{\widetilde}
\nc{\opp}{\mathrm{opp}}

\def\wh{\widehat}
\def\pr{\mathrm{pr}}

\newcommand{\fct}[4]{\begin{split}#1&\longrightarrow#2\\#3&\longmapsto#4\end{split}}


\makeop{Ram}
\makeop{Rep}


\begin{document}
\title{The Chevalley--Gras formula over
        global fields}


\author{Jianing Li}
\address{Jianing Li\\CAS Wu Wen-Tsun Key Laboratory of Mathematics\\ University of Science and Technology of China \\ Hefei, Anhui 230026, China}
\email{lijn@ustc.edu.cn}
\urladdr{http://staff.ustc.edu.cn/$\sim$lijn}

\author{Chia-Fu Yu}
\address{Chia-Fu Yu\\
        Institute of Mathematics, Academia Sinica and NCTS \\
        Astronomy Mathematics Building \\
        No. 1, Roosevelt Rd. Sec. 4 \\ 
        Taipei, Taiwan, 10617} 
\email{chiafu@math.sinica.edu.tw}

\subjclass[2010]{11R29, 11R37, 11R34} 

\keywords{class groups, ambiguous class number formulas, class field theory}

\maketitle


\begin{abstract}
  In this article we give an adelic proof of the Chevalley--Gras formula
  for global fields, which itself is a generalization of  
  the ambiguous class number formula. The idea is to reduce the
  formula to the Hasse norm theorem and to the local and
  global reciprocity laws. We also give an adelic proof of the
  Chevalley--Gras formula for the   class group of divisors of degree $0$ in the
  function field case, which extends a result of Rosen.
\end{abstract} 

\maketitle

\bigskip
\section{Introduction}
\label{sec:01}

Let $K/k$ be a cyclic extension of number fields with Galois group
$G$. Let $\grm$ be a modulus of $k$, which gives rise to a
modulus $\grm_K$ of $K$.
The ray class group $\Cl^{\grm_K}_K$ modulo $\grm_K$ 
admits a $G$-module structure.
The Chevalley--Gras formula describes an explicit relationship 
between the generalized ambiguous ray class number 
$|(\Cl^{\grm_K}_{K}/{\mathcal{C}})^G|$  and 
$|\Cl^\grm_{k}/{N(\mathcal{C})}|$, where $\mathcal{C}\subset
\Cl^{\grm_K}_{K}$ is any $G$-submodule and $N$ is the norm map from $K$ to
$k$. In the case when the submodule
$\mathcal{C}$ and the modulus $\grm$ are trivial, 
the formula then relates the class numbers 
$|\Cl_K^G|$ and $|\Cl_k|$, which is  the classical ambiguous class
number formula due to Chevalley \cite[p.~406]{chevalley}. A proof of
Chevalley's formula 
can be found in Gras' book \cite[Lemma 6.1.2 and Remark 6.2.3]{gras:cft} or
in Lang's book \cite[Chapter 13, \S4, Lemma 4.1]{lang:cyc}. Lemmermeyer \cite{lemmermeyer:amb}
gives an elementary proof which follows closely the approach taken by  
Lang, but avoiding the machinery of cohomologies. 
The existing proofs of Chevalley's formula are reduced to a result
of the  Herbrand quotient of global units.  
In \cite[Th\'{e}or\`{e}me 4.3]{gras:73}, 
Gras gave a formula for narrow class groups with arbitrary
$\mathcal{C}$.   
In \cite[Th\'{e}or\`{e}me 2.7]{gras:94} 
(also see the English translation \cite[Section 2]{gras:17}), 
he proved this formula for ray class groups.  
His proof is based on Chevalley's formula. Recently, 
a generalization of Chevalley's class number formula to dihedral extensions
has been investigated by Caputo and Nuccio \cite{caputo-nuccio}.
 
In this article we give an adelic proof of the Chevalley--Gras formula
over global fields. More precisely, using the adelic language, we
reduce the formula 
to the Hasse norm theorem and to the
local and global index theorems, which is shorter and more conceptual.

In the function field case, the class group of divisors of degree $0$  
deserves a special attention. 
The ambiguous class number formula (the case $\mathcal{C}=0$) 
for functions fields 
was obtained by Rosen \cite
{rosen:amb-funfield}. We also give an adelic proof of the formula with
an arbitrary $G$-submodule $\mathcal{C}$.

In the last section we add an elementary exposition of a cohomological
variant for $S$-ray  class groups, for the sake of
completeness. This formulation is valid for an arbitrary Galois
extension $K/k$, and is essentially equivalent to Chevalley's original
formula in the cyclic case, 
thanks to the theorem on the Herbrand quotient of
global units.

\section{The Chevalley--Gras formula}
\label{sec:CG}

In this section, we  recall the definition of the $S$-ray class group
and prove the Chevalley--Gras formula. We then give some special cases
of this formula for future convenience. In Example~\ref{eg: redei},
we  use this formula  to reprove a classical result of R\'{e}dei on the
$4$-rank of the narrow class group of quadratic fields as this
approach does not seem to appear in the literature.

\subsection{Notation and $S$-ray class groups}
\label{sec:CG.1}

Let $F$ be a global field, that is, $F$ is either a number field 
(a finite extension of $\Q$) or a global function field 
(a finite extension of $\F_p(t)$ for some prime $p$).
Let $V_F$ denote the set of all places of $F$, and let $V_{F,\infty}$
(resp. $V_{F,f}$) denote the subset of archimedean (resp.~finite)
places. (So in the function field case, $V_{F,\infty}=\emptyset$.)  For each place $w\in V_F$, 
the completion of $F$ at $w$ is denoted by
$F_w$. 
Let $\calO_w$ denote the ring of integers of $F_w$ if $w$ is
finite. The canonical embedding from $F$ to the completion $F_w$ is 
also denoted by $w$.

The letter $S$ always denotes a non-empty finite set 
of places of $F$ containing $V_{F,\infty}$.  Denote by $\calO_{F,S}$
the ring of $S$-integers of $F$, which consists of all
elements $a\in F$ such that $a \in \co_w$ for all $w\not\in S$. 
An $S$-modulus $\grm$ is a formal product $\grm_\infty\cdot \grm_f$,
where 
$\grm_f$ is a nonzero integral ideal of $\calO_{F,S}$ and 
$\grm_\infty$ is a formal
product of some  real places if $F$ is a number field and 
$\grm_\infty$ is always $1$ otherwise.
Let $S(\grm):=\{w\in V_F|  w|\grm\}$ be the support of $\grm$.
Let $I_F$ be the free abelian group
generated by $V_{F,f}$, and $I_F^{S(\grm)}$ the subgroup generated by
$V_{F,f}\setminus S(\grm)$. The ideal $\grm_f$ corresponds to an effective
divisor in $I_F$ whose support is disjoint from $S$ (in $V_{F,f}$).
Let \[ F^\grm:=\{x\in F^\times| x\equiv 1\bmod \grm_f \text{ and
  } w(x)>0 \text{ for each real place } w \mid \grm_{\infty}\}. \]
Let $i:F^\times \to I_{F}$ be the natural map defined by $a\mapsto
\sum_{w\in V_{F,f}} \ord_w(a) w$. Note that
 $i(F^\grm)\subset I^{S(\grm)}_F$.  
The \emph{ray class group  of $F$ modulo  $\grm$}  is defined as 
 \[ \Cl^{\grm}_{F} :=  I^{S(\grm)}_F/{i(F^{\grm})}.  \]
The \emph{$S$-ray class group of $F$ modulo $\grm$} is defined as 
\begin{equation}
  \label{eq:CG.1}
  \Cl^{\grm}_{F,S} := \Cl^\grm_{F}/\langle{\text{image of $S\cap
     V_{F,f}$}}\rangle.
\end{equation}
Since $S$ is non-empty, $\Cl^{\grm}_{F,S}$ is finite. 

Alternatively, let $I_{F}^{S(\grm) \cup S}\subset I^{S(\grm)}_F$ be the
subgroup generated by $V_{F,f}\setminus (S\cup S(\grm))$. Then we have a
projection $\pr: I^{S(\grm)}_F \to I^{S(\grm)}_F/\<S\cap V_{F,f}\>\cong
I_{F}^{S(\grm) \cup S}$. Composing with $i$, we obtain a map $i_S:F^\grm
\to I_{F}^{S(\grm) \cup S}$, which maps $a$ to 
$\mathrm{div}^S(a):=\sum_{w\not\in S} \ord_w(a) w$. 
Then one can define $\Cl^{\grm}_{F,S}$ by  \[I_{F}^{S(\grm) \cup
  S}/i_S(F^\grm),\] and this agrees with the definition
\eqref{eq:CG.1}. The group $I_{F}^{S}$ can also be naturally identified with 
the ideal group of $\calO_{F,S}$.
Under this identification, the map
$i_S: F^\grm \to I_{F}^{S(\grm) \cup S}$ sends $a$ to the principal
ideal $a \calO_{F,S}$. Put $P_{F}^{\grm,S}:=i_S(F^\grm)$, the subgroup
of principal  $S$-ideals modulo $\grm$,  and then we have 
$\Cl^{\grm}_{F,S}=I_{F}^{S(\grm) \cup S}/P_{F}^{\grm,S}$. 
In the case where 
$\grm=1$, this is the \emph{$S$-ideal class group of $F$} and  
is denoted by $\Cl_{F,S}$.

For convenience, we also let $\calO_w:=F_w$ if $w\in V_{F,\infty}$,
and for each $w|\grm$ we also write 
\[ 
1+\grm \calO_w:=
\begin{cases}
  1+\grm_f \calO_w & \text{if $w\mid \grm_f$;} \\
  (F_w^\times)^2 & \text{if $w \mid \grm_\infty$.} \\
\end{cases} \]

Let $K/k$ be a finite Galois extension of global fields with Galois group $G$. Let
$(S_K,\grm_K)$ be a pair consisting of a finite set $S_K$ of places 
and a modulus
$\grm_K$ of $K$ as above. Suppose that both $S_K$ and $\grm_K$ are
$G$-invariant. Then the $S_K$-ray class group $\Cl^{\grm_K}_{K,S_K}$
admits an action of $G$. So for any $G$-submodule $\calC$ of 
$\Cl^{\grm_K}_{K,S_K}$, one may look for 
a relationship between $|(\Cl^{\grm_K}_{K,S_K}/\mathcal{C})^G|$ and
 $|\Cl^{\grm}_{k,S}/N(\mathcal{C})|$, for a suitable pair $(S,\grm)$
 for $k$ related to $(S_K,\grm_K)$, where $N$ is the norm map from
 $K$ to $k$. This question is  answered mostly when $K/k$ is cyclic
 and remains open in general, even for the abelian case.

Suppose $(S,\grm)$ is a pair for $k$. Let $S_K$ be the set of places
of $K$ over $S$ and let $\grm_K:=\grm_{K,\infty} \cdot \grm_{K,f}$,
where $\grm_{K,\infty}$ is the set of real places of $K$ over the
support of $\grm_\infty$ and $ \grm_{K,f}:=\grm_f \calO_{K,S}$. Then
$(S_K,\grm_K)$ is a $G$-invariant  pair, and we say $(S_K,\grm_K)$ is
induced by $(S,\grm)$. In this case, we also write
$\Cl^\grm_{K,S}$ for $\Cl^{\grm_K}_{K,S_K}$ and call it the $S$-ray
class group of $K$ modulo $\grm$. When $\grm=1$, we write also $\Cl_{K,S}$ 
for $\Cl_{K,S_K}$ and call it the $S$-ideal
class group of $K$.

\subsection{The main formula}
\label{sec:CG.2}
Let $K/k$ be a cyclic extension of global fields with group
$G=\<\sigma\>$, where $\sigma$ is a generator. Let $N=N_{K/k}$ be 
the norm map from $K$ to $k$. 
Let $S\supset V_{k,\infty}$ be a finite non-empty set of places of $k$
and $\grm$ an $S$-modulus. Let $\Cl^\grm_{K,S}:=\Cl^{\grm_K}_{K,S_K}$
be  the $S$-ray class group modulo $\grm$, where $(S_K,\grm_K)$ is
the pair induced by $(S,\grm)$.

For $v\in V_{k,f}$, denote by $e_v$ and $f_v$ 
the ramification index and inertia
degree of $v$ in $K/k$ respectively.  In the number field case, if $v$ is real and every place
$w|v$ of $K$ is complex, we say that $v$ is ramified in $K$, 
and put $e_v=2$ and $f_v=1$,  otherwise, we put $e_v=f_v=1$. 
The following theorem which we call the Chevalley--Gras formula over
global fields is proved
by Gras in the number field case; see \cite[Theorem 3.6]{gras:17}.

 \begin{thm}\label{CG}
Let $K/k$ be a cyclic extension of global fields with Galois group
$G$. Let $\grm$ be a modulus of $k$, and let $S\supset V_{k,\infty}$ 
be a finite non-empty set of places of $k$ such that $S\cap
S(\grm_f)=\emptyset$.
Let $\mathcal{C}$ be a 
$G$-submodule of the $S$-ray class group  $\Cl^\grm_{K,S}$. 
Let    $D$ be any subgroup of $I^{S(\grm)}_{K}$ such that the image of $D$ in $\Cl^\grm_{K,S}$ is equal to
$\mathcal{C}$.
Then  
\[
\frac{|(\Cl^\grm_{K,S}/\mathcal{C})^G|}{|\Cl^\grm_{k,S}/{N(\mathcal{C})}|}
= \frac{\prod \limits_{v \in S\setminus S(\grm)}  e_v f_v
  \prod\limits_{v\in S(\grm_f)} [1+\grm\co_v: N(\prod\limits_{w\mid
    v}(1+\grm\co_w))] \prod\limits_{v\notin S\cup S(\grm)}e_v
  }{[K:k][\Lambda:\Lambda\cap N(K^\grm)]}. \] 
Here $\Lambda=\{x \in k^\grm | (x)\co_{k,S}= N(d)\calO_{k,S}   \text{ in }
I^{S}_k  \text{ for some } d\in D \}$.
 \end{thm}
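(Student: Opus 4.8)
The plan is to realize everything inside the idele class group and to compute the displayed ratio as an Euler–Poincaré-type alternating product, reducing each ingredient to a local or global reciprocity statement. First I would fix the cyclic group $G=\langle\sigma\rangle$, write $N$ for the norm, and introduce the exact sequence
\[
1\longrightarrow P_K^{\grm,S}\longrightarrow I_K^{S(\grm)\cup S_K}\longrightarrow \Cl^\grm_{K,S}\longrightarrow 1
\]
of $G$-modules. Quotienting by $\mathcal C$ (equivalently by the image of $D$) and passing to $G$-invariants produces a six-term exact sequence involving $H^0(G,-)$ and $H^1(G,-)=\ker/\im$ of the norm; for a cyclic group these cohomology groups fit into the periodicity that makes the Herbrand-type bookkeeping go through. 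The key point is that $\Cl^\grm_{k,S}/N(\mathcal C)$ is exactly $H^0$ of the bottom group modulo $N(D)$, so the left-hand side of the formula is
\[
\frac{|H^0(G,\Cl^\grm_{K,S}/\mathcal C)|}{|H^0(G,I^S_k/\,\text{(image)})\,|}\,,
\]
and by the snake lemma this equals a product/quotient of orders of $H^0$ and $H^1$ of the two ``smaller'' modules $I_K^{S(\grm)\cup S_K}/D$ and $P_K^{\grm,S}/(P_K^{\grm,S}\cap D)$, with the correction factor $[\Lambda:\Lambda\cap N(K^\grm)]$ coming precisely from comparing $N(D)$ inside $I^S_k$ with principal ideals — that is, with $\Lambda$.

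Next I would compute each local contribution. The module $I_K^{S(\grm)\cup S_K}$ is a permutation $G$-module on the places of $K$ away from $S_K\cup S(\grm_K)$: each orbit over an unramified $v\notin S\cup S(\grm)$ contributes a free $\Z[G/G_w]$-summand, which has trivial $H^1$ and whose $H^0$ has order $1$; the ramified primes $v\notin S\cup S(\grm)$ are where the factor $e_v$ appears, via $H^{-1}(G_w,\Z)$ or equivalently the index $[\Z:N_{G_w}\Z]$. For the finitely many places in $S$ and in $S(\grm)$ I would instead compute with the \emph{idele-theoretic} description: $P_K^{\grm,S}$ is a quotient of $K^\grm$, whose cohomology I control by Hilbert 90 for $H^1(G,K^\times)$ together with the local unit groups $1+\grm\co_w$ at the $w\mid\grm_f$ — this is where the local index $[1+\grm\co_v:N(\prod_{w\mid v}(1+\grm\co_w))]$ enters — and with $e_vf_v$ at $v\in S$, which is the order of $H^0$ of the local idele group $\prod_{w\mid v}K_w^\times$ modulo norms, i.e.\ $[k_v^\times:N(\prod_{w\mid v}K_w^\times)]=e_vf_v$ by local class field theory. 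Assembling: every finite place contributes exactly one of the three factors in the numerator, the global denominator $[K:k]$ is the order of $H^2(G,K^\times)$-type contribution (really the global norm index, via the product formula / global reciprocity), and $[\Lambda:\Lambda\cap N(K^\grm)]$ is the genuinely global defect measuring failure of the Hasse norm principle relative to $D$.

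The main obstacle — and the place where the argument is not merely formal — is controlling the global term, i.e.\ showing that after all the local indices are pulled out, what remains is exactly $[K:k]\cdot[\Lambda:\Lambda\cap N(K^\grm)]$ in the denominator and nothing else. Concretely this amounts to the exactness of
\[
k^\grm\cap N(\A_K^\times) \big/ N(K^\grm) \;\hookrightarrow\; \text{(a computable quotient)},
\]
and the surjectivity statement built into the Hasse norm theorem: a global element of $k^\grm$ is a global norm iff it is a norm everywhere locally. I would isolate this as a lemma before proving Theorem~\ref{CG}: the product of local norm indices times $[K:k]$ equals the index of global norms in the relevant idele quotient, by the fundamental exact sequence of global class field theory $0\to k^\times/N K^\times\to \bigoplus_v k_v^\times/N K_w^\times\xrightarrow{\sum\inv_v}\tfrac1{[K:k]}\Z/\Z\to 0$. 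Granting that lemma, the rest of the proof of Theorem~\ref{CG} is the diagram chase sketched above, with the factor $[\Lambda:\Lambda\cap N(K^\grm)]$ appearing as the cokernel of $N:D\to \Lambda$ measured against $N(K^\grm)$.
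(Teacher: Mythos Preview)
Your approach is genuinely different from the paper's. You start from the ideal-theoretic short exact sequence $1\to P_K^{\grm,S}\to I_K^{S(\grm)\cup S_K}\to\Cl^\grm_{K,S}\to 1$, quotient by (a lift of) $\mathcal{C}$, and plan to compute via the long exact sequence in $G$-cohomology, extracting local factors place by place and invoking a Herbrand-quotient / global reciprocity argument for the $[K:k]$ term. This is the classical strategy going back to Gras, and it is essentially the route the paper itself takes in its Section~\ref{sec:A} --- but \emph{only for the special case} $\mathcal{C}=0$.

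The paper's proof of Theorem~\ref{CG}, by contrast, is purely idelic and sidesteps this cohomological bookkeeping. It begins with the isomorphism $\Cl^\grm_{K,S}/\mathcal{C}\cong\A_K^\times/\widetilde D$ (where $\widetilde D=\pi^{-1}(D)K^\times U_{K,S}^\grm$), and the decisive step is to apply the snake lemma not to the sequence of ideal groups but to the \emph{norm map} itself: from
\[
\begin{CD}
1 @>>> \widetilde D\cap(\A_K^\times)^{1-\sigma} @>>> \widetilde D @>N>> N(\widetilde D) @>>> 1\\
@. @VVV @VVV @VVV\\
1 @>>> (\A_K^\times)^{1-\sigma} @>>> \A_K^\times @>N>> N(\A_K^\times) @>>> 1
\end{CD}
\]
one reads off $|(\Cl^\grm_{K,S}/\mathcal C)^G|=|N(\A_K^\times)/N(\widetilde D)|$ directly. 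Hasse's norm theorem then identifies this with $|k^\times N(\A_K^\times)/k^\times N(\widetilde D)|$, and the global norm index theorem supplies the factor $[K:k]$ in one stroke. All the local factors in the numerator fall out of the single computation of $U^\grm_{k,S}/N(U^\grm_{K,S})$, and $[\Lambda:\Lambda\cap N(K^\grm)]$ is isolated via an explicit homomorphism $f:\Lambda\to U^\grm_{k,S}\cap k^\times N(\widetilde D)/N(U^\grm_{K,S})$. This route never touches the Herbrand quotient of $E^\grm_{K,S}$ and handles arbitrary $\mathcal{C}$ with no extra effort, since $\widetilde D$ need not be $G$-stable --- only its image $\mathcal C$ is.

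Your sketch is viable in outline, but two points must be sharpened before it becomes a proof. First, $D$ is only assumed to be a subgroup of $I_K^{S(\grm)}$, not a $G$-submodule, so ``quotienting by $D$ and taking $G$-cohomology'' is not literally meaningful; you must either replace $D$ by a $G$-stable choice and verify this does not change $[\Lambda:\Lambda\cap N(K^\grm)]$ (this is precisely the content of the paper's Remark~\ref{rmk: after thm}(1) and equation~\eqref{eq:indD}), or work modulo $\mathcal C$ throughout and track the lift separately. Second, your identification of the source of $[K:k]$ is muddled: in the cohomological framework you set up, the degree enters through the Herbrand quotient $|H^2|/|H^1|=\prod_{v\in S}|G_v|/[K:k]$ of the $S$-unit group, not through ``$H^2(G,K^\times)$'' or the Brauer-group sequence $0\to k^\times/NK^\times\to\bigoplus_v k_v^\times/NK_w^\times\to\tfrac{1}{[K:k]}\Z/\Z\to 0$ you wrote (that sequence concerns $H^2$, not norm indices, and in any case the first group is not finite). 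If instead you want to import the global norm index theorem $|\A_k^\times/k^\times N(\A_K^\times)|=[K:k]$ directly, you are already in the idelic setting, and at that point the paper's argument is the cleaner way to organize everything.
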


\begin{remark}\label{rmk: after thm}
\begin{enumerate}
\item  The group $\Lambda$ depends on the choice of $D$,
  however, we will see in \eqref{eq:indD} that the index $[\Lambda:\Lambda\cap
  N(K^\grm)]$ depends only on $\calC$. 
\item The group $N(\prod\limits_{w\mid v}(1+\grm\co_w))$  
equals  $ N_{K_w/k_v}(1+\grm\co_w)$  for any $w\mid v$ as $K/k$ is Galois. 
\end{enumerate}
\end{remark}

\begin{proof}
        We first express $\Cl^\grm_{K,S}/\mathcal{C}$ in terms of
        ideles. 
Let $\A^\times_K$ denote the idele group of $K$.  
Let $\A^\grm_K=\{(a_w)_w \in \A^\times_K| a_w \in 1+\grm\co_w
\text{ for each } w\mid \grm \}$. We denote the  canonical surjection
$\A^\grm_K  \rightarrow I^{S(\grm)}_K$ by $\pi$. The kernel of $\pi$  is $
U^\grm_{K}:= \prod_{w\nmid \grm}\co^\times_w \prod_{w\mid \grm}
1+\grm\co_w .$  
Put 
\[U^{\grm}_{K,S}=U^{\grm}_K \prod_{w\in S_K,
  w\nmid \grm_\infty} K^\times_w=\prod_{w\nmid \grm, w\not\in S_K} 
\co^\times_w \prod_{w\mid \grm}
1+\grm\co_w \prod_{w\in S_K,
  w\nmid \grm_\infty} K^\times_w.\] 
Then $\pi$ induces  an
isomorphism   \[ \A^\grm_K/{K^{\grm}
  U^{\grm}_{K,S}}\cong\Cl^\grm_{K,S}.\] 
By the approximation theorem,   $\A^\grm_K K^\times = \A^\times_K$. Note that $\A^\grm_K\cap K^\times U^{\grm}_{K,S} = K^\grm U^{\grm}_{K,S}$.   So the inclusion  $\A^\grm_K\subset \A^\times_K$ induces an isomorphism 
\[  \Cl^\grm_{K,S} \cong \A^\times_K/K^\times U^\grm_{K,S}.   \]
Put $\widetilde{D}=\pi^{-1}(D) K^\times U^\grm_{K,S}$. It follows that 
\[  \Cl^\grm_{K,S}/\mathcal{C} \cong\A^\times_K/\widetilde{D}.\]

Since $H^1(G,\A^\times_K)=0$, the fact $G=\langle \sigma \rangle$ is
cyclic implies that  $ (\A^\times_K)^{1-\sigma}$ 
is the kernel of the norm from $\A^\times_K$ to $\A^\times_k$.
 So there is an exact commutative diagram  
\[
\begin{CD}
1 @>>> \widetilde{D} \cap  (\A^\times_K)^{1-\sigma}  @>>> \widetilde{D}   @>N >>
N(\widetilde{D} ) @>>> 1 \\
@. @VVV @VVV @VVV \\
1 @>>>  (\A^\times_K)^{1-\sigma} @>>> \A_K^\times @>{N}>>
N(\A^\times_K) @>>> 1.  \\    
\end{CD}
\] 
The snake lemma gives  the short exact sequence
\[ 1 \rightarrow (\Cl^\grm_{K,S}/{\mathcal{C}})^{1-\sigma}\rightarrow
\Cl^\grm_{K,S}/{\mathcal{C}} \rightarrow
N(\A^\times_K)/N(\widetilde{D}) \rightarrow 1,\] as one has $
(\A^\times_K)^{1-\sigma}/ (\A^\times_K)^{1-\sigma}\cap
\widetilde{D})\cong  (\Cl^\grm_{K,S}/{\mathcal{C}})^{1-\sigma}$.  For
any finite $G$-module $M$, one has $|M^{G}|=|M/{M^{1-\sigma}}|$ 
by the exact sequence  
\[ 0 \rightarrow M^G \rightarrow M \xrightarrow[]{1-\sigma}  M
\rightarrow M/{M^{1-\sigma}}\rightarrow 0. \] 
Thus we obtain the equality   \[|(\Cl^\grm_{K,S}/{\mathcal{C}})^G|=|
N(\A^\times_K)/N(\widetilde{D})|.\]  
Recall that Hasse's norm theorem  says that $k^\times \cap
N(\A^\times_K)=N(K^\times)$.  
So given an element  $N(x)=a N(d)\in  N(\A^\times_K) \cap k^\times
N(\widetilde{D})$ with $x\in \A^\times_K$, $a\in k^\times $ and $d\in
\widetilde{D}$,  we have  $a=N(y)$ for some $y\in K^\times$. Hence $
N(\A^\times_K)\cap k^\times N(\widetilde{D}) = N(K^\times
)N(\widetilde{D}) = N(\widetilde{D})$.  
Therefore the natural map  
\[N(\A^\times_K)/N(\widetilde{D}) \rightarrow  k^\times
N(\A^\times_K)/k^\times N(\widetilde{D}) \] 
is an isomorphism.  The global index theorem \cite[Chapter IX,
\S5]{lang:ant} says that $|\A^\times_k/{k^\times
  N(\A^\times_K)}|=|G|=[K:k]$. This implies that $k^\times
N(\A^\times_K)/k^\times N(\widetilde{D})$ is a 
subgroup of $\A^\times_k/{k^\times N(\widetilde{D})}$ with index
$[K:k]$. Therefore   
\[ |(\Cl^\grm_{K,S}/{\mathcal{C}})^G|=[K:k]^{-1}|\A^\times_k/{k^\times
  N(\wt D)}|.\]
To compute $\A^\times_k/{k^\times N(\widetilde{D})}$, we consider the
exact sequence 
\begin{equation}
  \label{eq:CG.4}
1 \rightarrow  k^\times N(\widetilde{D})U^\grm_{k,S}   /k^\times N(\widetilde{D})  \rightarrow \A^\times_k/k^\times N(\widetilde{D}) \rightarrow \A^\times_k/k^\times U^\grm_{k,S}  N(\widetilde{D})   \rightarrow 1.  
\end{equation}
We claim that the last term is isomorphic to $\Cl^\grm_{k,S}/N(\mathcal{C})$.   By the identification $\A^{\grm}_k/k^\grm U^\grm_{k,S}\cong \Cl^\grm_{k,S}$,  we have that  $N(\mathcal{C}) \subset \Cl^\grm_{k,S}$ is the image of $N(\pi^{-1}(D)) $ in $\Cl^\grm_{k,S}$. Hence 
$$\Cl^\grm_{k,S}/N(\mathcal{C})\cong \A^{\grm}_k/N(\pi^{-1}(D))k^\grm U^\grm_{k,S}.$$
Then the inclusion $\A^\grm_k\hookrightarrow \A^\times_k$ induces an isomorphism 
\[\A^{\grm}_k/N(\pi^{-1}(D))k^\grm U^\grm_{k,S} \cong \A^{\times}_k/N(\pi^{-1}(D))k^\times U^\grm_{k,S}= \A^\times_k/k^\times U^\grm_{k,S}  N(\widetilde{D}).  \]

The first term of \eqref{eq:CG.4}  can be computed by the exact sequence
\begin{equation}
  \label{eq:CG.5}
  1 \rightarrow U^\grm_{k,S}\cap k^\times N(\widetilde{D}) /N(U^\grm_{K,S})  \rightarrow U^\grm_{k,S}/N(U^\grm_{K,S}) \rightarrow  k^\times N(\widetilde{D})U^\grm_{k,S}   /k^\times N(\widetilde{D}) \rightarrow 1.
\end{equation}

Let  $G_v$ and 
$I_v$ be  the decomposition group  and inertia group of $v$
respectively.  For each place $v$ of $k$, we choose a place $w$  of
$K$ above $v$.  By local class field theory,  $H^2(G_v,
K_w^\times)=k^\times_v/{N_{K_w/{k_v}}(K^\times_w)}\cong G_v$ and
$H^2(G_v,\co^\times_w)=\co^\times_v/{N_{K_w/k_v}(\co^\times_w)}\cong
I_v$. It follows from the
cyclicity of $G$ and Shapiro's Lemma that \[\begin{split} 
U^\grm_{k,S}/N(U^\grm_{K,S}) 
& \cong \prod_{v\in S\setminus S(\grm)} H^2(G_v, K^\times_w) \times  \prod_{v\in S(\grm)}  \frac{(1+\grm\co_v)}
{N_{K_w/{k_v}}(1+\grm\co_w)}\times  \prod_{v\notin S(\grm)\cup S} H^2(G_v, \co^\times_w)   \\&
\cong \prod_{v\in S\setminus S(\grm)}
G_v \times \prod_{v\in S(\grm_f)} \frac{(1+\grm\co_v)}
{N_{K_w/{k_v}}(1+\grm\co_w)} \times
\prod_{v\notin S\cup S(\grm)} I_v. \\
\end{split} \] 
Here we use our condition that $S$ is disjoint with $S(\grm_f)$ and the fact that $1+\grm\co_v=\R_{>0}=N_{K_w/k_v}(1+\grm\co_w)$ if $v\mid \grm_\infty$. This contributes to the numerator of the right hand side
term in the theorem.  
In order to prove the theorem, it suffices to show that the first
term of \eqref{eq:CG.5} is isomorphic to
$\Lambda/\Lambda\cap N(K^\grm)$.

Recall that $\pi$ is the natural projection $\A^\grm_{K}\rightarrow I^{S(\grm)}_K$. 
Write $\bar{D}=\pi^{-1}(D)$ for simplicity.  
As $U^\grm_{k,S}N(\bar{D})\subset \A^\grm_k$, it is direct to check that 
 \begin{equation}\label{eq: CG5.5}
\Lambda=k^\grm  \cap U^\grm_{k,S}N(\bar{D})=  k^\times  \cap U^\grm_{k,S}N(\bar{D}).
 \end{equation}  Given $x=u N(\bar{d})\in \Lambda$ with $u\in U^\grm_{k,S}$ and $\bar{d}\in \bar{D}$, we define  a function $f$ as follows:
\[ \fct{f: \Lambda}{U^\grm_{k,S}\cap k^\times N(\bar{D}U^\grm_{K,S})/N(U^\grm_{K,S}) }{x}{u\bmod N(U^\grm_{K,S})}.\]
We need to  show that $f$ is well-defined. Suppose $x=uN(\bar{d})=u'N(\bar{d'}) \in \Lambda$ with $u,u'\in U^\grm_{k,S}$ and $\bar{d},\bar{d'}\in \bar{D}$. Then $u'/u=N(\bar{d}/\bar{d'})\in N(\bar{D})\cap U^\grm_{k,S}\subset N(\A^\grm_K)\cap U^\grm_{k,S}$. By Lemma~\ref{lem: idelic-compute}(1), the last group coincides with $N(U^\grm_{K,S})$ . So $f$ is a well-defined map. 

It is clear that $f$ is a group homomorphism.  We  show that $f$ is
surjective.  Let $u= t N(\bar{d} a)$ be an element of
$U^\grm_{k,S}\cap k^\times N(\bar{D}U^\grm_{K,S})$ with $t\in
k^\times,  \bar{d}\in \bar{D}$ and $a\in U^\grm_{K,S}$.  Then
$t=uN(a)^{-1} N(\bar{d})^{-1}$ with  $uN(a)^{-1}\in U^\grm_{k,S}$ and
$N(\bar{d})^{-1}\in N(\bar{D})$.  Note that $t$ is in fact in
$k^\grm$. This shows $t\in \Lambda$ by \eqref{eq: CG5.5}.  We have
$f(t)=uN(a)^{-1} \bmod N(U^\grm_{K,S}) \equiv u \bmod
N(U^\grm_{K,S})$. This proves the surjectivity.

 The kernel of $f$ by definition coincides with $\Lambda \cap N(U^\grm_{K,S}\bar{D})$.  Lemma~\ref{lem: idelic-compute}(3) shows that it also equals  $\Lambda\cap N(K^\grm)$. Thus,  as desired,   $f$ induces an isomorphism
\begin{equation}
   \label{eq:indD}
   \Lambda/\Lambda\cap N(K^\grm) \cong U^\grm_{k,S}\cap k^\times N(\bar{D}U^\grm_{K,S})/N(U^\grm_{K,S}).
\end{equation}
Observe that the term $k^\times N(\bar{D}U^\grm_{K,S})$ is independent
of the choice of $D$ as $k^\times N(K^\times
\bar{D}U^\grm_{K,S})=k^\times N(\wt D)$.  
This finishes the proof of the theorem.   \end{proof}

\begin{lemma}\label{lem: idelic-compute}
We have the following equalities:
\begin{enumerate}
\item $N(\A^\grm_K)\cap U^\grm_{k,S}=N(U^\grm_{K,S})$;

\item $N(K^\times)\cap N(\A^\grm_K)=N(K^\grm)$;

\item $\Lambda \cap N(U^\grm_{K,S}\bar{D})=\Lambda\cap
N(K^\grm)=\Lambda\cap N(\A^\grm_K)$. 
\end{enumerate}
\end{lemma}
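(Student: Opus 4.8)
The plan is to prove the three identities in turn, with (3) derived formally from (1), (2) and Hasse's norm theorem. For (1), the inclusion $N(U^\grm_{K,S})\subseteq N(\A^\grm_K)\cap U^\grm_{k,S}$ is routine: $U^\grm_{K,S}\subseteq\A^\grm_K$, and $N$ carries each local factor of $U^\grm_{K,S}$ into the corresponding factor of $U^\grm_{k,S}$ (it sends $\co^\times_w$ into $\co^\times_v$, sends $1+\grm\co_w$ into $1+\grm\co_v$ since $\grm_f$ is extended from $k$, and at a real $v\mid\grm_\infty$ one has $N_{K_w/k_v}((K^\times_w)^2)=\R_{>0}=1+\grm\co_v$). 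For the reverse inclusion I would take $b=N(a)\in N(\A^\grm_K)\cap U^\grm_{k,S}$ with $a\in\A^\grm_K$ and modify $a$ place by place into an element $a'\in U^\grm_{K,S}$ with $N(a')=b$. At a place $v$ lying in $S$ or dividing $\grm$ one keeps $a'_w=a_w$, because there the local condition defining $\A^\grm_K$ coincides with that defining $U^\grm_{K,S}$ (this uses the hypothesis $S\cap S(\grm_f)=\emptyset$). The only substantive point is at $v\notin S\cup S(\grm)$, where $b_v\in\co^\times_v$: since $\{w\mid v\}$ is a single $G$-orbit, the semilocal norm on $\prod_{w\mid v}K^\times_w$ has image $N_{K_w/k_v}(K^\times_w)$ (Remark~\ref{rmk: after thm}(2)), so $b_v\in N_{K_w/k_v}(K^\times_w)\cap\co^\times_v$, and I would then invoke
\[ N_{K_w/k_v}(K^\times_w)\cap\co^\times_v=N_{K_w/k_v}(\co^\times_w), \]
which follows from $\ord_v\circ N_{K_w/k_v}=f(w/v)\cdot\ord_w$ (equivalently, from the injectivity of $H^2(G_v,\co^\times_w)\to H^2(G_v,K^\times_w)$, i.e.\ $I_v\hookrightarrow G_v$). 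Thus $b_v=N_{K_w/k_v}(c)$ for some $c\in\co^\times_w$, and taking $a'_w=c$, $a'_{w'}=1$ for the remaining $w'\mid v$, yields $a'\in U^\grm_{K,S}$ with $N(a')=b$.

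For (2), the inclusion $N(K^\grm)\subseteq N(K^\times)\cap N(\A^\grm_K)$ is clear because $K^\grm\subseteq K^\times\cap\A^\grm_K$. Conversely, suppose $y=N(z)=N(a)$ with $z\in K^\times$ and $a\in\A^\grm_K$; then $az^{-1}$ has trivial norm, so by Hilbert's Theorem~90 for ideles ($H^1(G,\A^\times_K)=0$, $G$ cyclic) we may write $az^{-1}=c^{1-\sigma}$ for some $c\in\A^\times_K$. Using weak approximation in $K$, choose $t\in K^\times$ close enough to $c_w$ at every place $w\mid\grm$ (a finite, $G$-stable set) that $(t/c)_w\in 1+\grm\co_w$ there, and set $z':=z\,t^{1-\sigma}=a\,(t/c)^{1-\sigma}$. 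At each $w\mid\grm$, both $(t/c)_w$ and $(t/c)_{\sigma^{-1}w}$ lie in the relevant congruence subgroups, and $\sigma$ carries $1+\grm\co_{\sigma^{-1}w}$ onto $1+\grm\co_w$ (these subgroups being cut out by data extended from $k$, $\grm_K$ being $G$-invariant), so $(t/c)^{1-\sigma}_w\in 1+\grm\co_w$ and hence $z'_w=a_w(t/c)^{1-\sigma}_w\in 1+\grm\co_w$; therefore $z'\in K^\times\cap\A^\grm_K=K^\grm$. Since $N(t^{1-\sigma})=1$ (the norm being $G$-invariant and landing in $k^\times$), we get $N(z')=N(z)=y$, so $y\in N(K^\grm)$.

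For (3), both $N(U^\grm_{K,S}\bar{D})$ and $N(K^\grm)$ are contained in $N(\A^\grm_K)$, so it suffices to show that $\Lambda\cap N(\A^\grm_K)$ is contained in each of them. Let $\lambda\in\Lambda\cap N(\A^\grm_K)$. By \eqref{eq: CG5.5}, write $\lambda=u\,N(\bar{d})$ with $u\in U^\grm_{k,S}$ and $\bar{d}\in\bar{D}\subseteq\A^\grm_K$; then $u=\lambda\,N(\bar{d})^{-1}\in N(\A^\grm_K)\cap U^\grm_{k,S}=N(U^\grm_{K,S})$ by (1), say $u=N(u')$ with $u'\in U^\grm_{K,S}$, so $\lambda=N(u'\bar{d})\in N(U^\grm_{K,S}\bar{D})$. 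On the other hand, $\lambda\in k^\times\cap N(\A^\times_K)=N(K^\times)$ by Hasse's norm theorem, whence $\lambda\in N(K^\times)\cap N(\A^\grm_K)=N(K^\grm)$ by (2). I expect the main obstacle to be the semilocal analysis in (1) --- chiefly the displayed local identity and the reduction of the norm over $\prod_{w\mid v}K^\times_w$ to a single completion; once (1) and (2) are in place, (3) is immediate.
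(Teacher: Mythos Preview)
Your proof is correct and follows essentially the same route as the paper's: part (1) via the local identity $N_{K_w/k_v}(K^\times_w)\cap\co^\times_v=N_{K_w/k_v}(\co^\times_w)$, part (2) via Hilbert~90 for ideles plus weak approximation, and part (3) formally from (1), (2), and Hasse's norm theorem. The only difference is cosmetic: for (2) the paper packages the same Hilbert-90-plus-approximation argument inside a snake-lemma diagram on $1\to K^\grm\to K^\times\times\A^\grm_K\to\A^\times_K\to 1$ (the surjectivity of $(K^\times)^{1-\sigma}\times(\A^\grm_K\cap(\A^\times_K)^{1-\sigma})\to(\A^\times_K)^{1-\sigma}$ being exactly your weak-approximation step), whereas you unwind it explicitly at the element level.
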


\begin{proof} Recall $\A^\grm_K=\{(a_w)_w \in \A^\times_K| a_w \in 1+\grm\co_w
        \text{ for each } w\mid \grm \}$.  As mentioned in Remark~\ref{rmk: after thm},  
$N(\prod_{w\mid v}K^\times_w)=N_{K_w/k_v}(K^\times_w)$ for each $w$ as
$K/k$ is Galois.  For a place $w\nmid \grm$, it is easy to see that $N_{K_w/k_v}(K^\times_w)\cap
\co^\times_v=N_{K_w/k_v}(\co^\times_w)$. We obtain $N(\prod_{w\mid
  v}K^\times_w)\cap \co^\times_v =N(\prod_{w\mid v}\co^\times_w)$. For a place $w\mid \grm$, we have the trivial equality $N_{K_w/k_v}(1+\grm\co_w)  \cap 1+\grm\co_v = N_{K_w/k_v}(1+\grm\co_w)$. It follows that   $N(\A^\grm_K)\cap U^\grm_{k,S}=N(U^\grm_{K,S})$. This proves (1).

 To prove  (2), let $K^\times \times \A^\grm_K $ denote the direct product of $K^\times$ and $\A^\grm_K$. 
 Consider the exact commutative diagram 
\[
\begin{CD}
1 @>>> K^\times\cap \A^\grm_K    @>{x\mapsto (x,x)}>> K^\times \times \A^\grm_K   @>{(a,b)\mapsto ab^{-1}} >>
K^\times \A^\grm_K @>>> 1 \\
@. @V N VV @V N\times N VV @V N VV \\
1 @>>> N(K^\times)\cap N(\A^\grm_K)  @>{x\mapsto (x,x)}>> N(K^\times)\times N(\A^\grm_K)  @>{(a,b)\mapsto ab^{-1}}>>
N(K^\times) N(\A^\grm_K)  @>>> 1.  \\    
\end{CD}
\] 
Note that $K^\times\cap \A^\grm_K$ by definition is $K^\grm$. By the
approximation theorem, $K^\times \A^\grm_K=\A^\times_K$. Since
$H^1(G,\A^\times_K)=H^1(G,K^\times)=0$ and $G$ is cyclic,  the snake
lemma gives an exact sequence 
\[ (K^\times)^{1-\sigma}\times (\A^\grm_K\cap
(\A^\times_K)^{1-\sigma})\rightarrow
(\A^\times_K)^{1-\sigma}\rightarrow  N(K^\times)\cap
N(\A^\grm_K)/{N(K^\grm)}\rightarrow 0.\] 
The first arrow is surjective by the weak approximation theorem. Thus the
last term is $0$. This proves (2). 

(3) Hasse's norm theorem says that $k^\times \cap
    N(\A^\times_K)=N(K^\times)$. Recall that $\Lambda=k^\times \cap
    U^\grm_{k,S}N(\bar{D})$ by \eqref{eq: CG5.5}.  By (2), we
    have 
\[\Lambda\cap N(U^\grm_{K,S}\bar{D})  = k^\times \cap
    N(U^\grm_{K,S}\bar{D})\subset N(K^\times)\cap
    N(U^\grm_{K,S}\bar{D})\subset N(K^\times)\cap
    N(\A^\grm_K)=N(K^\grm). \] 
This proves the inclusion $\Lambda\cap
    N(U^\grm_{K,S}\bar{D})\subset \Lambda \cap N(K^\grm)$. To show the
    other inclusion,  note that $U^\grm_{k,S}N(\bar{D})\cap
    N(\A^\grm_K)=N(U^\grm_{K,S}\bar{D})$ by (1).  Then  
\[\Lambda\cap N(K^\grm)\subset \Lambda\cap N(\A^\grm_K)=k^\times \cap
    N(U^\grm_{K,S}\bar{D})=\Lambda \cap N(U^\grm_{K,S}\bar{D}).\]  
The last equality follows from \eqref{eq: CG5.5}.

The second equality in (3) follows from 
\[\Lambda \cap N(\A^\grm_K) \subset \Lambda \cap N(K^\times)\cap N(\A^\grm_K) = \Lambda \cap N(K^\grm). \] This completes the proof of the lemma.   \end{proof}

\begin{remark}
  The idea of our adelic proof of Theorem~\ref{CG} comes from 
  \cite{yu:amb}, which shows that Chevalley's ambiguous class number
  formula follows immediately from the Hasse norm theorem, and the
  local and global norm index theorems. When the extension $K/k$ is
  abelian, we know that the analogous statements for the 
  local and global norm index theorems hold
  true; see \cite[Chapter~IX, Sections 3 and 5]{lang:ant}. However, to
  extend Chevalley's formula to abelian extensions, the
  assumption that $K/k$ is cyclic is crucial in the argument used 
  in \cite{yu:amb}.  
\end{remark}

\subsection{Examples}
\label{sec:CG.3}
We list some  special cases of the Chevalley--Gras formula in the number field case. 
\begin{example}
        $(1)$ If $\grm=1$ and $S$ is the set of infinite places,  then
        $\Cl^\grm_{K,S}$ is equal to  $\Cl_K$,  the class group of $K$. The theorem says 
        \[  \frac{|(\Cl_K/\mathcal{C})^G|}{|\Cl_k/{N(\mathcal{C})}|}=\frac{\prod_{ v\leq \infty}e_v}{[K:k][\Lambda:\Lambda\cap N(K^\times)]}.
        \]
        If we let $\mathcal{C}$ and $D$ be trivial, then
        $\Lambda=\co^\times_k$, the unit group of $\co_k$. The formula
        becomes the ambiguous class number formula for the class group 
        \[  \frac{|(\Cl_K)^G|}{|\Cl_k|}=\frac{\prod_{v\leq
        \infty}e_v}{[K:k][\co^\times_k :\co^\times_k \cap
        N(K^\times)]}. 
        \]

        $(2)$ If  $\grm$ is the product of all the real places of $k$ and $S$ is the set of infinite places, then $\Cl^\grm_{K,S}$ is the narrow class group  $\Cl^{+}_K$ of $K$.  Similarly, $\Cl^\grm_{k,S}=\Cl^{+}_k$. Note that $K^\grm$ is equal to  $K^{+}$,  the group of totally positive elements of $K^\times$.  The theorem says \begin{equation}\label{eq:gras}
\frac{|(\Cl^{+}_K/\mathcal{C})^G|}{|\Cl^{+}_k/{N(\mathcal{C})}|}=\frac{\prod_{v\nmid \infty}e_v}{[K:k][\Lambda:\Lambda\cap N(K^+)]}.  
        \end{equation} 
                If we further  let $\mathcal{C}$ and $D$ be trivial, then $\Lambda=(\co^\times_k)^{+}$. The formula becomes the ambiguous class number formula for narrow class groups which was first proved by Chevalley in \cite[p.~406]{chevalley}. 
        \end{example}

We now use  the formula \eqref{eq:gras} to reprove 
a classical result of R\'{e}dei.

\begin{example}[$4$-rank of  narrow class groups of quadratic fields]
  \label{eg: redei} 
   Let $K$ be a quadratic number field with discriminant $d$. Let
           $T=\{p_1,\cdots,p_t\}$ be the set of prime numbers ramified
           in $K$.  Let $G=\Gal(K/\Q)=\langle \sigma \rangle$. For
           $a\in \Cl^{+}_K$, $N_{K/\Q}(a)=a a^\sigma=1$ as $\Q$ has
           class number $1$. This implies that  
           $|\Cl^{+}_K[2]|:=|\{a\in \Cl^{+}_K|
           a^2=1\}|=|(\Cl^{+}_K)^G|$. The latter term has cardinality
           $2^{t-1}$ by Chevalley's formula \eqref{eq:gras}. In other
           words, the $2$-rank of $\Cl^{+}_K$ is $t-1$. 
        The following $\F_2$-matrix is the R\'{e}dei matrix:
        \begin{equation}\label{eq:redeimatrix}
R:=\left(\log(p_i,d)_{p_j}\right)_{1\leq i,j\leq t}.
        \end{equation} 
        Here $\log: \{\pm1\}\twoheadrightarrow \F_2$ is the logarithm
        map and $(p_i,d)_{p_j}$ is the quadratic Hilbert
        symbol of $p_i$ and $d$  at  the prime $p_j$. Note that the sum of each row of this matrix
        is zero by the product formula of Hilbert symbols. A theorem of R\'{e}dei  \cite[Theorem 3.1]{stevenhagen:redei} says that
          the $4$-rank of $\Cl^{+}_K$ is $t-1-\mathrm{rank}(R)$.
          
          The matrix $R$ is the transpose of the matrix in
          \cite[Theorem 3.1]{stevenhagen:redei}.  One can check
          that the logarithm Hilbert symbol  $\log(p_i,d)_{p_j}$
          coincides with the logarithm Kronecker symbol
          $\left(\frac{p^{*}_j}{p_i}\right) \in \F_2$  when $i\neq j$. Here
          $p^*=(-1)^{{p-1}/2}p$ for $p$ is odd. If $2\mid d$, $2^*$ is
          the number such that $d=\prod_{p\mid d}p^{*}$.
\end{example}

The proof in \cite{stevenhagen:redei} uses the explicit construction
of the $2$-Hilbert class field. We give a proof by applying
\eqref{eq:gras} to $K/\Q$.  
By definition, the $4$-rank of $\Cl^{+}_K$ is $\mathrm{rank}_{4}
\Cl^{+}_K=\dim_{\F_2}\Cl^{+}_K[4]/\Cl^{+}_K[2].$   As we mentioned,
$a^{\sigma}=a^{-1}$ for $a\in \Cl^{+}_K$.  It follows that    
\[a \bmod \Cl^{+}_K[2] \in (\Cl^{+}_K/{\Cl^{+}_K[2]})^G
\Leftrightarrow a^\sigma a^{-1} = a^{-2} \in \Cl^{+}_K[2]
\Leftrightarrow a\in \Cl^{+}_K[4].\] 
This shows $\Cl^{+}_K[4]/\Cl^{+}_K[2] = (\Cl^{+}_K/{\Cl^{+}_K[2]})^G$. We now use \eqref{eq:gras} to compute the order of this group.

Take $\mathcal{C}=\Cl^{+}_K[2]$ in  \eqref{eq:gras}. 
  It is well known that $\mathcal{C}$ is generated by the ramified
  prime ideals.  We add a proof for this fact here for
  the sake of completeness. Suppose $I\in I_K$ such that its image
  $\mathrm{cl}(I)$ is in $\Cl^{+}_K[2]=(\Cl^{+}(K))^G$. Then $I^\sigma
  I^{-1}$  is generated by  some totally positive element $\alpha\in
  K^\times$.  Since $N(I^\sigma I^{-1})=(1)$ in $I_\Q$, we have
  $N(\alpha)=\pm 1$.  Thus $N(\alpha)=1$ by the positivity of $\alpha$.  By
  Hilbert's Theorem 90, $\alpha =\beta^\sigma \beta^{-1}$ for some
  $\beta\in 
  K^\times$. Note that we can assume $\beta$ is totally positive. Thus
  $I\beta^{-1}$ is a $G$-invariant fractional ideal of $K$.  It
  follows that $I\beta^{-1}\in \langle D, I_\Q\rangle$ (see
  Lemma~\ref{A.lem}(1)), where $D\subset I_K$ is a subgroup such that
  ${\rm cl}(D)=\calC=\Cl_K^+[2]$ in $\Cl^+_K$.  
  Thus $\mathrm{cl}(I)\in \mathrm{cl}(D)$.   
        
Let $D$ be the subgroup of $I_K$  generated by the ramified prime
ideals.  We have shown that $D$ generates $\mathcal{C}$.   The group
$\Lambda$ of \eqref{eq:gras} is  then the subgroup of $\Q^\times$
generated by $T=\{p_1,\cdots, p_t\}$. Consider the
following map 
\[  \Lambda\rightarrow  \prod_{j=1}^{t}\{\pm  1\}, \quad  x \mapsto
\left((x,d)_{p_j}\right)_j. \] 
The kernel is $\Lambda\cap N(K^\times)$ by the properties of Hilbert
symbols and Hasse's norm theorem,  for details see \cite[Lemma
2.8]{ljn:clgp}.    
  Note that $\Lambda \cap N(K^\times) = \Lambda \cap N(K^+)$ as $\Lambda \subset \Q^{+}$.  
 The image has size $2^r$ where $r=\mathrm{rank}(R)$
is the rank of the R\'{e}dei matrix \eqref{eq:redeimatrix}. Therefore 
        \[|\Cl^{+}_K[4]/\Cl^{+}_K[2]|=|(\Cl^{+}_K/{\Cl^{+}_K[2]})^G| =2^{t-1-r}.\]

\section{The case of  class groups of divisors of degree $0$}
We  let $K/k$ be a cyclic extension of global function fields 
with Galois group $G$. Denote by $\F_{q'}$ and $\F_q$  the
constant fields of $K$ and $k$, respectively. 
Let $\A^0_K$ be the kernel of the degree map
 \[\deg_K:\A^\times_K \rightarrow \Z, \quad (x_w)_w \mapsto \sum_{w}
{\ord_w(x_w)[k_w:\F_{q'}]},\] where $k_w$ is the residue field of $w$.  Let $U_K=\prod_{w}\co^\times_w$.  
The  class group of divisors  and the  class group of divisors of degree $0$ of $K$  are
defined  respectively by  
\[   \Cl_K=\A^\times_K/{U_K K^\times} \quad \text{ and } \quad
\Cl^0_K=\A^0_K/{U_K K^\times}.\] 
It is well known that $\Cl^0_K$  is finite. 
The degree map induces the
exact sequence 
\[ 0\rightarrow \Cl^0_K \rightarrow \Cl_K \xrightarrow[]{\deg_K}  \Z
\rightarrow 0.  \] 
See \cite[Chapter V, Theorem 5]{artin-tate} 
for the surjectivity of $\deg_K$. We define $\Cl_k$, $\Cl^0_k, \deg_k, U_k$ for $k$ in the same way. Let $N$ denote the norm map from $K$ to $k$.  For a prime divisor $w\in \A^\times_K/{U_K}$ of $K$, by definition $N(w)=v^{[k_w:k_v]}$ where $v$ is the prime divisor of $k$ below $w$. This implies  $\deg_k(N(\A^\times_K))=[\F_{q'}:\F_q]\Z$.

Let $\mathcal{C}$ be a $G$-submodule of $\Cl^{0}_K$. 
Choose any subgroup $D$ of $\A^0_K$ such that the image of $D$ 
in $\Cl^0_K$ is equal to $\mathcal{C}$, and put 
$\Lambda:=k^\times \cap N(D)U_k$ in
$\A^\times_k$. Note that $\Lambda$ depends on the choice of $D$,
however, its image in $k^\times\cap N(D)N(K^\times) U_k/N(K^\times)$
depends only on $\mathcal{C}$. In particular, 
the index $[\Lambda:\Lambda\cap N(K^\times)]$ depends only on $\mathcal{C}$.
Let $d(K/k)\in \Z$ denote the positive generator of the ideal
$\deg_K(\Cl^G_K)$ of $\Z$. 

\begin{thm}\label{thm: function_field} With notation as above, 
        one has  \[
        |(\Cl^0_K/{\mathcal{C}})^G|=|\Cl^0_k/{N(\mathcal{C})}|
        \frac{[\F_{q'}:\F_q]\prod_v e_v}{[K:k][\Lambda:\Lambda\cap
        N(K^\times)]}d(K/k). \] 
\end{thm}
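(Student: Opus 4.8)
The plan is to mimic the adelic proof of Theorem~\ref{CG}, replacing the idele group $\A^\times_K$ by its degree-$0$ subgroup $\A^0_K$ and the ray class group by $\Cl^0_K$. First I would rewrite $\Cl^0_K/\mathcal{C}$ idelically: with $\wt D := D\cdot U_K K^\times \subset \A^0_K$ we have $\Cl^0_K/\mathcal{C} \cong \A^0_K/\wt D$. The first subtlety relative to the number field case is that $\A^0_K$ is no longer $G$-stable in the obvious harmless way — actually it is $G$-stable, but one needs $H^1(G,\A^0_K)$ information, which is not simply $0$. So instead I would work with the full idele group and keep careful track of degrees: apply the snake lemma to the norm map $N\colon \A^\times_K \to N(\A^\times_K)$ restricted to $\wt D$ exactly as in the proof of Theorem~\ref{CG}, and use $H^1(G,\A^\times_K)=0$ together with cyclicity to get the short exact sequence $1 \to (\Cl_K/\mathcal{C}')^{1-\sigma} \to \Cl_K/\mathcal{C}' \to N(\A^\times_K)/N(\wt D) \to 1$ where now the relevant group on the left is a class group of \emph{all} divisors, not degree-$0$ ones. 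The degree map then has to be used to descend from $\Cl_K$ to $\Cl^0_K$; this is where the factors $d(K/k)$ and $[\F_{q'}:\F_q]$ enter, via the comparison of $\deg_K$ on $\A^\times_K$, on $\Cl_K^G$, and on $N(\A^\times_K)$.

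Concretely, I would set up the commutative diagram relating $0 \to \Cl^0_K \to \Cl_K \xrightarrow{\deg_K} \Z \to 0$ with its analogue for $k$ and with the norm maps, quotient everything by $\mathcal{C}$ and $N(\mathcal{C})$, and take $G$-invariants. The snake lemma on this produces, roughly, the identity $|(\Cl^0_K/\mathcal{C})^G| = |(\Cl_K/\wt{\mathcal{C}})^G| \cdot |\ker|/|\mathrm{coker}|$ where the kernel/cokernel terms measure the failure of $\deg_K$ to be surjective on the $G$-invariant part of $\Cl_K$: this is precisely the index $d(K/k)$, and on the $k$-side the degree map $\deg_k$ restricted to $N(\Cl_K)$ has image $[\F_{q'}:\F_q]\Z$ as computed in the excerpt (since $\deg_k(N(\A^\times_K)) = [\F_{q'}:\F_q]\Z$). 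So the degree bookkeeping contributes exactly the factor $[\F_{q'}:\F_q]\, d(K/k)$.

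For the remaining factor $\dfrac{\prod_v e_v}{[K:k][\Lambda:\Lambda\cap N(K^\times)]}$, I would argue as in Theorem~\ref{CG} but with the simpler local data: here $\grm=1$ and $S=\emptyset$ in the function field sense, so $U^\grm_{K,S}$ becomes $U_K = \prod_w \co_w^\times$. The computation $U_k/N(U_K) \cong \prod_v \co_v^\times/N_{K_w/k_v}(\co_w^\times) \cong \prod_v I_v$ by Shapiro's lemma and local class field theory gives $\prod_v e_v$ (note $I_v = \Z/e_v\Z$ and almost all $e_v=1$ so the product is finite). The global index theorem $|\A^\times_k/k^\times N(\A^\times_K)| = [K:k]$ supplies the $[K:k]$ in the denominator, and Hasse's norm theorem $k^\times \cap N(\A^\times_K) = N(K^\times)$ lets one identify $N(\A^\times_K)/N(\wt D) \cong k^\times N(\A^\times_K)/k^\times N(\wt D)$ just as before. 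The analogue of the key isomorphism \eqref{eq:indD} — that $\Lambda/(\Lambda\cap N(K^\times))$ computes the first term of the exact sequence for $\A^\times_k/k^\times N(\wt D)$, with $\Lambda = k^\times \cap N(D)U_k$ — is proved by the same well-definedness/surjectivity/kernel argument, using the function-field analogue of Lemma~\ref{lem: idelic-compute}: namely $N(\A^\times_K)\cap U_k = N(U_K)$, and $N(K^\times)\cap N(\A^\times_K)$ statements, which hold verbatim since $\grm=1$ trivializes the $1+\grm\co_w$ factors and the weak approximation / $H^1$ arguments go through unchanged.

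The main obstacle is the degree bookkeeping: unlike the number field case, where $\A^\times_K$ itself carries the class group, here one must pass between $\Cl_K$ and $\Cl^0_K$ and track how the norm interacts with $\deg_K$ on the $G$-fixed part. Making precise that the correction term is exactly $[\F_{q'}:\F_q]\,d(K/k)$ — rather than, say, $d(K/k)^2$ or $[\F_{q'}:\F_q]^2$ — requires a careful diagram chase: one has the four groups $\Cl_K^G$, $\deg_K(\Cl_K^G) = d(K/k)\Z$, $N(\Cl_K) \subset \Cl_k$ with $\deg_k(N(\Cl_K)) = [\F_{q'}:\F_q]\Z$, and the interplay of these under the snake lemma applied to the degree sequences. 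I expect this to reduce to an elementary index computation once the diagram is correctly drawn, with the two factors appearing asymmetrically — $d(K/k)$ from the ``source'' side (invariants upstairs) and $[\F_{q'}:\F_q]$ from the ``target'' side (norms downstairs) — but setting it up cleanly is the crux of the argument, and everything else is a transcription of the proof of Theorem~\ref{CG}.
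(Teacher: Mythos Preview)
Your plan is correct and your intuition about where the two extra factors enter---$d(K/k)$ from the ``source'' side and $[\F_{q'}:\F_q]$ from the ``target'' side---is exactly right. The paper's execution is, however, cleaner than the route you sketch, and it sidesteps the one genuinely awkward point in your outline: the appearance of the infinite group $(\Cl_K/\mathcal{C})^G$ in your ``rough identity'' $|(\Cl^0_K/\mathcal{C})^G| = |(\Cl_K/\mathcal{C})^G|\cdot|\ker|/|\mathrm{coker}|$, which as written is a ratio of infinities.

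The paper avoids this entirely by observing that $(\A^\times_K)^{1-\sigma}\subset \A^0_K$ (elements of the form $x^{1-\sigma}$ have degree zero). Hence one can run the snake lemma on $\wt D\hookrightarrow \A^0_K$ mapped by $N$ to $N(\wt D)\hookrightarrow N(\A^0_K)$, with the \emph{same} kernel $(\A^\times_K)^{1-\sigma}$ as in the full idele version. This produces directly the finite short exact sequence
\[
0\;\longrightarrow\;(\Cl_K/\mathcal{C})^{1-\sigma}\;\longrightarrow\;\Cl^0_K/\mathcal{C}\;\longrightarrow\;N(\A^0_K)/N(\wt D)\;\longrightarrow\;0,
\]
with $\Cl^0_K/\mathcal{C}$ already in the middle and no infinite groups in sight. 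The factor $d(K/k)$ then enters through a short self-contained lemma: applying the snake lemma to the degree sequence and the map $1-\sigma$ yields $|(\Cl_K/\mathcal{C})^{1-\sigma}/(\Cl^0_K/\mathcal{C})^{1-\sigma}|=d(K/k)$, whence $|(\Cl^0_K/\mathcal{C})^G|=d(K/k)\cdot|N(\A^0_K)/N(\wt D)|$. On the target side, rather than using the full global index $[\A^\times_k:k^\times N(\A^\times_K)]=[K:k]$ and then separately correcting by $[\F_{q'}:\F_q]$ via degree bookkeeping, the paper computes $[\A^0_k:k^\times N(\A^0_K)]=[K:k]/[\F_{q'}:\F_q]$ in one stroke by reading the Artin reciprocity isomorphism $\A^\times_k/k^\times N(\A^\times_K)\cong G$ against the degree map and its compatibility with Frobenius on $\Gal(\F_{q'}/\F_q)$. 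Everything else---Hasse's norm theorem, $U_k/N(U_K)\cong\prod_v I_v$, and the identification of $\Lambda/\Lambda\cap N(K^\times)$---is exactly as you say, a transcription of the proof of Theorem~\ref{CG} with $\grm=1$.

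So your approach would work, but the paper's single observation $(\A^\times_K)^{1-\sigma}\subset\A^0_K$ collapses your ``careful diagram chase'' into two clean one-line computations, one for each correction factor.
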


\begin{remark} 
    Putting $\mathcal{C}=0$ and $D=0$, 
    we obtain the following formula 
    \[|(\Cl^0_K)^G|=|\Cl^0_k|
        \frac{[\, \F_{q'}:\F_q]\prod_v e_v}{[K:k]\cdot [\,\Fq^\times:\Fq^\times \cap
        N(K^\times)]}d(K/k). \]
    When $q'=q$, this recovers the ambiguous
    class number formula obtained by Rosen (see \cite[Theorem
    8 and Proposition 2]{rosen:amb-funfield}).
     It is shown in \cite[p.164]{rosen:amb-funfield} 
     that the invariant $d(K/k)$ divides another invariant
     $\delta(K/k)$ which is easier to compute.
     Rosen also computed  $d(K/k)$ in some special cases; see
        \cite[Theorem 4]{rosen:amb-funfield}. For example, if the
        cyclic extension $K/k$ is unramified everywhere, then
        $d(K/k)=[K:k]$; see \cite[Corollary to Theorem 4]{rosen:amb-funfield}. 
\end{remark}

\begin{lemma}
Let $\sigma$ be a generator of $G$. For any  $G$-submodule
$\mathcal{C}\subset \Cl^0_K$, we have \[
d(K/k)=|(\Cl_K/{\mathcal{C}})^{1-\sigma}/
(\Cl^0_K/{\mathcal{C}})^{1-\sigma}|. \] 
\end{lemma}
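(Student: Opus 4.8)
The plan is to compute the quantity $d(K/k)=|\deg_K(\Cl^G_K)|$ directly using the exact sequence
\[ 0\rightarrow \Cl^0_K/\mathcal{C} \rightarrow \Cl_K/\mathcal{C} \xrightarrow{\deg_K} \Z \rightarrow 0, \]
which is exact because $\mathcal{C}\subset \Cl^0_K$. First I would apply the snake lemma to the endomorphism $1-\sigma$ acting on this short exact sequence (so on each term one has the four-term sequence $0\to M^G\to M\xrightarrow{1-\sigma} M\to M/M^{1-\sigma}\to 0$, using that $G=\langle\sigma\rangle$ and $\Z$ carries the trivial $G$-action so $1-\sigma$ is the zero map on $\Z$). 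Since $\Cl^0_K/\mathcal{C}$ is finite, $(1-\sigma)$ on it has the same-size kernel and cokernel; on $\Z$ the map $1-\sigma$ is zero, so its kernel is $\Z$ and its cokernel is $\Z$.

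Concretely, the snake lemma gives the exact sequence
\[ 0\to (\Cl^0_K/\mathcal{C})^G\to (\Cl_K/\mathcal{C})^G\xrightarrow{\deg_K} \Z\to (\Cl^0_K/\mathcal{C})/(\Cl^0_K/\mathcal{C})^{1-\sigma}\to (\Cl_K/\mathcal{C})/(\Cl_K/\mathcal{C})^{1-\sigma}\to \Z\to 0. \]
The image of $\deg_K$ on $(\Cl_K/\mathcal{C})^G$ is exactly the subgroup whose index in $\Z$ we need. But $\mathcal{C}\subset\Cl^0_K=\ker\deg_K$, so $\deg_K$ on $\Cl_K/\mathcal{C}$ agrees with $\deg_K$ on $\Cl_K$, and the subgroup $\deg_K((\Cl_K/\mathcal{C})^G)$ of $\Z$ — call its positive generator $d'$ — has index $d'$. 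Chasing the sequence: the cokernel of $\deg_K\colon (\Cl_K/\mathcal{C})^G\to\Z$ injects into $(\Cl^0_K/\mathcal{C})/(\Cl^0_K/\mathcal{C})^{1-\sigma}$, and the remaining portion of the sequence relates $(\Cl^0_K/\mathcal{C})/(\Cl^0_K/\mathcal{C})^{1-\sigma}$ and $(\Cl_K/\mathcal{C})/(\Cl_K/\mathcal{C})^{1-\sigma}$, with the final surjection onto $\Z$ having kernel of index... Rather than track all of this I would instead count: from exactness, the alternating product of orders (interpreting the $\Z$ terms via the relevant finite subquotients) yields
\[ d' = |\Z/\deg_K((\Cl_K/\mathcal{C})^G)| = \frac{|(\Cl^0_K/\mathcal{C})/(\Cl^0_K/\mathcal{C})^{1-\sigma}|}{|(\Cl_K/\mathcal{C})/(\Cl_K/\mathcal{C})^{1-\sigma}|}\cdot|\ker(\text{last map})|, \]
and I would identify the kernel of the last map $\Cl_K/\mathcal{C}\,/(1-\sigma)\to\Z$ with the image of $(\Cl^0_K/\mathcal{C})/(1-\sigma)$, making the product telescope to the claimed formula $d(K/k)=|(\Cl_K/\mathcal{C})^{1-\sigma}/(\Cl^0_K/\mathcal{C})^{1-\sigma}|$, where the quotient notation means the cokernel of the natural map $(\Cl^0_K/\mathcal{C})^{1-\sigma}\hookrightarrow(\Cl_K/\mathcal{C})^{1-\sigma}$.

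The cleanest route, and the one I would actually write up, avoids the snake lemma entirely: apply $1-\sigma$ to the short exact sequence and use the snake lemma only in the form "$\ker$ and $\coker$ of $1-\sigma$ fit in a six-term sequence," but more directly observe that $(\Cl_K/\mathcal{C})^{1-\sigma}$ is the image of $1-\sigma$ on $\Cl_K/\mathcal{C}$, that $(\Cl^0_K/\mathcal{C})^{1-\sigma}$ is its intersection with $\Cl^0_K/\mathcal{C}$ (since $1-\sigma$ preserves $\Cl^0_K/\mathcal{C}$ and the latter is saturated being a kernel), hence the quotient $(\Cl_K/\mathcal{C})^{1-\sigma}/(\Cl^0_K/\mathcal{C})^{1-\sigma}$ embeds into $(\Cl_K/\mathcal{C})/(\Cl^0_K/\mathcal{C})\cong\Z$ via $\deg_K$, with image $\deg_K((\Cl_K/\mathcal{C})^{1-\sigma})$. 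Finally I would check $\deg_K((\Cl_K/\mathcal{C})^{1-\sigma}) = \deg_K(\Cl_K^{1-\sigma}) = \deg_K(\Cl_K^G)$: the first equality holds because $\mathcal{C}\subset\ker\deg_K$; the second because for a finitely generated $G$-module surjecting onto $\Z$ with $\Z$ trivial, one has $\deg_K\circ(1-\sigma)=0$ is false — rather, I use that $\deg_K(\Cl_K^G)\subseteq \deg_K(\Cl_K)=\Z$ and that the six-term sequence forces $\deg_K(\Cl_K^G)=\deg_K(\Cl_K^{1-\sigma})$ as subgroups of $\Z$ (both equal the "defect" subgroup, since $\Cl_K^0$ is finite). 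The main obstacle is precisely this last identification — making rigorous that passing to the $G$-invariants versus taking the image of $1-\sigma$ gives the same subgroup of $\Z$, which again comes down to the finiteness of $\Cl^0_K$ and a careful diagram chase — so I would isolate it as the one computational lemma and present the rest as formal manipulation.
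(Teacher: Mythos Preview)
Your snake-lemma approach --- apply $1-\sigma$ to the short exact sequence $0\to\Cl^0_K/\mathcal{C}\to\Cl_K/\mathcal{C}\to\Z\to 0$ and read off the six-term kernel--cokernel sequence --- is exactly the paper's approach; the paper simply packages it as a $3\times 3$ diagram whose columns are $0\to M^G\to M\to M^{1-\sigma}\to 0$ and whose bottom row immediately gives the statement.

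Your ``cleanest route,'' however, contains a genuine error. Since $G$ acts trivially on $\Z$, one has $\deg_K\circ(1-\sigma)=0$, hence
\[
(\Cl_K/\mathcal{C})^{1-\sigma}\ \subset\ \ker\deg_K\ =\ \Cl^0_K/\mathcal{C}.
\]
So the map $\deg_K$ restricted to $(\Cl_K/\mathcal{C})^{1-\sigma}$ is identically zero, and your claimed embedding of $(\Cl_K/\mathcal{C})^{1-\sigma}/(\Cl^0_K/\mathcal{C})^{1-\sigma}$ into $\Z$ via $\deg_K$ is the zero map, not one with image $d(K/k)\Z$. The related assertion that $(\Cl^0_K/\mathcal{C})^{1-\sigma}$ equals $(\Cl_K/\mathcal{C})^{1-\sigma}\cap(\Cl^0_K/\mathcal{C})$ is also wrong for the same reason: the right-hand side is \emph{all} of $(\Cl_K/\mathcal{C})^{1-\sigma}$, which would make the quotient trivial. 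The invariant you are after measures how much larger $(1-\sigma)(\Cl_K/\mathcal{C})$ is than $(1-\sigma)(\Cl^0_K/\mathcal{C})$ \emph{inside} $\Cl^0_K/\mathcal{C}$, and $\deg_K$ cannot detect this directly.

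The correct computation is the one you abandoned. Writing $A=\Cl^0_K/\mathcal{C}$ and $B=\Cl_K/\mathcal{C}$, and letting $d'\Z:=\deg_K(B^G)$, the six-term sequence breaks into
\[
0\to\Z/d'\Z\to A/A^{1-\sigma}\to B/B^{1-\sigma}\to\Z\to 0.
\]
Because $B^{1-\sigma}\subset A$, the kernel of the last map is $A/B^{1-\sigma}$, and comparing with the first three terms gives $|B^{1-\sigma}/A^{1-\sigma}|=|A/A^{1-\sigma}|\big/|A/B^{1-\sigma}|=d'$. This is precisely the content of the paper's bottom row. The remaining identification $d'=d(K/k)$, i.e.\ $\deg_K((\Cl_K/\mathcal{C})^G)=\deg_K(\Cl_K^G)$, is what the paper records as exactness of its top row; your attempt to prove it via ``$\deg_K(\Cl_K^G)=\deg_K(\Cl_K^{1-\sigma})$'' fails for the reason above (the right side is zero), so if you want to isolate a lemma, that is the correct one to isolate --- and it is a statement about $G$-fixed points, not about images of $1-\sigma$.
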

\begin{proof}
        This follows from the exact sequences 
        \[ \begin{xymatrix}{
                & 0 \ar@{->}[d] & 0 \ar@{->}[d]& 0 \ar@{->}[d] & \\ 
                0  \ar@{->}[r] & (\Cl^0_K/{\mathcal{C}})^G
                \ar@{->}[r] \ar@{->}[d] & (\Cl_K/{\mathcal{C}})^G
                \ar@{->}[r]\ar@{->}[d] & d(K/k)\Z \ar@{->}[r]
                \ar@{->}[d] & 0\\ 
                0 \ar@{->}[r]  & \Cl^0_K/{\mathcal{C}} \ar@{->}[r]
                \ar@{->}[d] & \Cl_K/{\mathcal{C}}
                \ar@{->}^-{\deg_K}[r] \ar@{->}^-{1-\sigma}[d] & \Z
                \ar@{->}[r] \ar@{->}[d] & 0     \\       
                0 \ar@{->}[r]  & (\Cl^0_K/{\mathcal{C}})^{1-\sigma}
                \ar@{->}[r] \ar@{->}[d] &
                (\Cl_K/{\mathcal{C}})^{1-\sigma}
                \ar@{->}[r]\ar@{->}[d] & \Z/{d(K/k)\Z}
                \ar@{->}[r]\ar@{->}[d] & 0 \\ 
                & 0 & 0 & 0 & \text{ }
        }  \end{xymatrix}  \] \end{proof}

Now we give an adelic proof of Theorem \ref{thm: function_field}. The
reader will realize  that  the proof is analogous to that   of
Theorem~\ref{CG}. 
\begin{proof}
        Put $\widetilde{D}=D K^\times U_K$. 
        The facts   $H^{1}(G,\A^\times_K)=\hat{H}^{-1}(G,\A^\times_K)=0$ and
        $(\A^\times_K)^{1-\sigma}\subset \A^0_K$ give the  exact
        commutative diagram         
        \[
        \begin{CD}
        1 @>>> \widetilde{D} \cap  (\A^\times_K)^{1-\sigma}  @>>>
        \widetilde{D}   @>N >> 
        N(\widetilde{D} ) @>>> 1 \\
        @. @VVV @VVV @VVV \\
        1 @>>>  (\A^\times_K)^{1-\sigma} @>>> \A^{0}_K @>{N}>>
        N(\A^0_K) @>>> 1.  \\    
        \end{CD}
        \]
As  $(\A^\times_K)^{1-\sigma}/\widetilde{D} \cap
        (\A^\times_K)^{1-\sigma} \cong
        (\Cl_K/\mathcal{C})^{1-\sigma}$,   
        the snake lemma gives the short exact sequence
        \[0 \rightarrow (\Cl_K/{\mathcal{C}})^{1-\sigma}\rightarrow
        \Cl^0_K/{\mathcal{C}} \rightarrow N(\A^0_K)/N(\widetilde{D})
        \rightarrow 0.\]  
        We remark that $(\Cl_K/\mathcal{C})^{1-\sigma}$ is finite
        although $\Cl_K/\mathcal{C}$ is infinite.  By the above lemma, 
        \[ 
        \begin{split}
        |(\Cl^0_K/{\mathcal{C}})^G|&= 
        |N(\A^0_K)/N(\widetilde{D})|\cdot
         |(\Cl_K/{\mathcal{C}})^{1-\sigma}/
         (\Cl^0_K/{\mathcal{C}})^{1-\sigma}| \\
        & =d(K/k)|N(\A^0_K)/N(\widetilde{D})| \\ 
        & = d(K/k)|N(\A^0_K) k^\times/N(\widetilde{D} ) k^\times|.         
        \end{split}
        \] 
       We prove the last equality as follows.   Let $N(x)=N(d)a\in
       N(\A^0_K)\cap N(\widetilde{D})k^\times$ with $x\in \A^0_K,
       d\in \widetilde{D}$ and $a \in k^\times$. Then $a=N(xd^{-1})\in
       k^\times \cap N(\A^0_K)\subset k^\times \cap
       N(\A^\times_K)=k^\times \cap N(K^\times)$ by Hasse's norm
       theorem.  Then the inclusion $N(\A^0_K)\subset
       N(\A^0_K)k^\times$ induces an  isomorphism  
       \[   N(\A^0_K)/N(\widetilde{D}) \cong N(\A^0_K) k^\times/N(\widetilde{D} ) k^\times.\]
    Consider the  short exact sequence
        \[0\rightarrow N(\A^0_K) k^\times/N(\widetilde{D}) k^\times
         \rightarrow \A^0_k/N(\widetilde{D}) k^\times  \rightarrow
         \A^0_k/N(\A^0_K)k^\times\rightarrow 0.\] 
        Suppose that $k=\F_q(E)$ is the function field of  some curve
         $E$. We apply the degree map to the Artin reciprocity map
         ${\rm Art}:\A^\times_k/{k^\times N(\A^\times_K)}\cong G$ and
         obtain the 
         following exact commutative
         diagram 
         \begin{equation}\label{Art}
 \begin{xymatrix}{
                0  \ar@{->}[r] &\A^0_k/{k^\times N(\A^0_K)}
                \ar@{->}[r] \ar@{->}[d]^-{\cong}_{\varphi_1} &
                \A^\times_k/{k^\times N(\A^\times_K)}
                \ar@{->}^-{\deg_k}[r]\ar@{->}[d]^-{\cong}_{\rm Art} &
                \Z/{[\F_{q'}:\F_q]\Z} \ar@{->}[r]
                \ar@{->}[d]^-{\cong}_{\varphi_2} & 0  \\ 
                0  \ar@{->}[r] &\Gal(K/\F_{q'}(E))   \ar@{->}[r]  & G
                \ar@{->}[r] & \Gal(\F_{q'}/\F_q) \ar@{->}[r]  & 0.  
        }\end{xymatrix}          
         \end{equation}
Note that the isomorphism $\varphi_2$ is induced by the Frobenius
map. The commutativity of the right diagram follows from 
\cite[Chapter VIII, Theorem 10]{artin-tate}. 
By the Corollary of \cite[Chapter VIII, Theorem 10]{artin-tate}, 
the map $\varphi_1$ is surjective and hence is an isomorphism.

It follows from \eqref{Art} that  
\[|\A^0_k/{k^\times N(\A^0_K)}
                |=\frac{[K:k]}{[\F_{q'}:\F_q]}.\] 
To prove the theorem, it remains to show that 
        \[|\A^0_k/N(\widetilde{D})
                k^\times|=|\Cl^0_k/{N(\mathcal{C})}| \frac{\prod_v
                e_v}{[\Lambda: \Lambda\cap N(K^\times)]}.\] 
The proof is the same as that following equation \eqref{eq:CG.4}
                in the proof of Theorem~\ref{CG}, and is omitted.  
\end{proof}

\section{A cohomological variant for $S$-ray class groups}\label{sec:A}

Let $K/k$ be a finite Galois extension of global fields with Galois group
$G$. As in Section~\ref{sec:CG.1}, we 
let $\grm$ be a modulus of $k$, and 
$S$ be a non-empty finite set of places of $k$ containing 
all archimedean places which is disjoint from the support of
$\grm_f$. 
In this section we shall discuss a cohomological variant of the 
ambiguous $S$-ray class number formula of $K/k$; 
see Theorem~\ref{A.1}. This formulation has been generalized to an arbitrary 
algebraic torus $T$ over $k$ by
Gonzalez-Aviles~\cite{Gonzalez} when the modulus $\grm$ is trivial, 
where the present formula may be viewed as the special case 
$T=\mathbb{G}_{m,k}$.
Furthermore, when $K/k$ is cyclic, we explain that 
Theorem~\ref{A.1} is essentially equivalent to 
Chevalley's ambiguous class number 
formula (the case $\calC=0$ in Theorem~\ref{CG}), 
thanks to the theorem on the Herbrand quotient of global $S$-units. 
The argument of the proof of Theorem~\ref{A.1} is slightly different from 
Lang's exposition \cite[Chapter XIII, Section 4]{lang:cyc}.

We keep the notation of Section \ref{sec:CG.1}. 
Recall that we write $\Cl^\grm_{K,S}$ for $\Cl^{\grm_K}_{K,S_K}$.
Let  $E^{\grm}_{K, S}$ be the intersection of the group of $S$-units
of $K$ with $K^\grm$. We have the exact sequence 
\begin{equation} 
\label{eq:A.1}
1 \rightarrow E^{\grm}_{K, S} \rightarrow K^\grm \xrightarrow {i_S}
P^{\grm,S}_{K}  \rightarrow 1.
\end{equation}
To state the main result, we need to separate the infinite part $\grm_\infty$ of the modulus $\grm=\grm_f\grm_\infty$. Write
 \begin{equation}\label{eq: divides modulus}
\grm_\infty=\grm^{r}_\infty\grm^{c}_\infty   \quad \text{ and }  \quad  \grm_{r}:=\grm_f \grm^r_\infty, \end{equation} where $\grm^{r}_\infty$ is the product of the real places $v$ dividing $\grm_\infty$ such that $v$ is unramified in $K$ (i.e. $v$ stays real in $K$) and $\grm^c_\infty$ is the product of those $v$ such that $v$ becomes complex in $K$. Note that 
\begin{equation}\label{eq: G-inv-modulus}
(K^\grm)^G= K^\grm\cap k^\times=k^{\grm_r} \quad   \text{ whence } \quad  (E_{K,S}^\grm)^G=E_{k,S}^{\grm_r}.\end{equation}

\begin{thm}\label{A.1}
Let $K/k$ be a finite Galois extension of global fields with Galois
group $G$.
Let $\grm$ be a modulus of $k$ and $\grm_r$ be as in \eqref{eq: divides modulus}. Let $S\supset V_{k,\infty}$ 
be a finite non-empty set of places of $k$ such that $S\cap
S(\grm_f)=\emptyset$.  
Then  
\[  \frac{|(\Cl^{\grm}_{K,S})^G|}{|\Cl^{\grm_r}_{k,S}|}
  = \frac{ |H^2(G, E^\grm_{K,S})|}{|H^1(G,E^\grm_{K,S})|}   
  \frac{\prod_{v\notin S\cup S(\grm)}e_v\cdot  |H^1(G,K^\grm)|}
  {|\mathrm{Im} \{H^2(G,
  E^\grm_{K,S}) \rightarrow H^2(G, K^\grm)\}|}.  \]
\end{thm}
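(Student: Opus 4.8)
I would run the same adelic machine as in the proof of Theorem~\ref{CG}, but now for an arbitrary finite Galois group $G$, replacing the Tate-cohomology shortcuts (valid only in the cyclic case) by honest $H^i(G,-)$ and the snake lemma by the long exact sequence in Galois cohomology. The starting point is the idelic description of $S$-ray class groups: as in the proof of Theorem~\ref{CG} one has $\Cl^\grm_{K,S}\cong \A^\times_K/K^\times U^\grm_{K,S}$, and taking the long exact cohomology sequence of
\[
1\to K^\times U^\grm_{K,S}\to \A^\times_K\to \A^\times_K/K^\times U^\grm_{K,S}\to 1
\]
together with $H^1(G,\A^\times_K)=0$ gives
\[
(\Cl^\grm_{K,S})^G \cong \A^\times_K{}^{\,G}\big/\,\mathrm{im}\big((K^\times U^\grm_{K,S})^G\big) \ \cdot\ \bigl(\text{a piece of }\ker H^1(G,K^\times U^\grm_{K,S})\bigr).
\]
More precisely, the relevant exact sequence reads
\[
1\to (K^\times U^\grm_{K,S})^G\backslash\!\!\backslash (\A^\times_K)^G \to (\Cl^\grm_{K,S})^G \to \ker\!\big(H^1(G,K^\times U^\grm_{K,S})\to H^1(G,\A^\times_K)=0\big)\to 1,
\]
so $(\Cl^\grm_{K,S})^G$ is an extension of $H^1(G,K^\times U^\grm_{K,S})$ by $(\A^\times_K)^G/(K^\times U^\grm_{K,S})^G$. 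The denominator $\Cl^{\grm_r}_{k,S}$ will enter through $(\A^\times_K)^G=\A^\times_k$ (by Shapiro/normic considerations on each local factor) and through the identification $(K^\times U^\grm_{K,S})^G$-part with $k^\times$-times a local unit group at the ``$\grm_r$'' level, using \eqref{eq: G-inv-modulus}.

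\textbf{Key steps, in order.} First I would compute $H^1(G,K^\times U^\grm_{K,S})$ by resolving $K^\times U^\grm_{K,S}$ against the two exact sequences at hand: the defining sequence $1\to U^\grm_{K,S}\cap K^\times\to K^\times\times U^\grm_{K,S}\to K^\times U^\grm_{K,S}\to 1$ (product trick, as in Lemma~\ref{lem: idelic-compute}(2)), together with \eqref{eq:A.1}, which gives $H^i(G,E^\grm_{K,S})$ and $H^i(G,P^{\grm,S}_K)$ into the picture. The intersection $U^\grm_{K,S}\cap K^\times$ is exactly $E^\grm_{K,S}$ (the group of $S$-units congruent to $1\bmod\grm$), so one gets a long exact sequence relating $H^i(G,E^\grm_{K,S})$, $H^i(G,K^\times)$, $H^i(G,U^\grm_{K,S})$ and $H^i(G,K^\times U^\grm_{K,S})$; Hilbert 90 kills $H^1(G,K^\times)$. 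Second, I would compute $H^i(G,U^\grm_{K,S})$ place-by-place via Shapiro's lemma: $U^\grm_{K,S}=\prod_{w}(\text{local factor})$ with the product running over $G$-orbits of places, so $H^i(G,U^\grm_{K,S})=\prod_v H^i(G_v,\text{local factor at }w\mid v)$, where the factor is $K_w^\times$ for $w\in S_K\setminus S(\grm_\infty)$, $1+\grm\co_w$ for $w\mid\grm_f$, $(K_w^\times)^2$ for $w\mid\grm_\infty$, and $\co_w^\times$ otherwise. Local class field theory then computes these: $H^2(G_v,K_w^\times)\cong G_v$, $H^2(G_v,\co_w^\times)\cong I_v$ of order $e_v$ (for $v$ unramified outside $S\cup S(\grm)$ this is the factor $\prod_{v\notin S\cup S(\grm)}e_v$), and $H^1$ of these factors vanishes by local Hilbert 90 and its unit variant. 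Third, I would assemble the pieces: the $H^1(G,K^\times U^\grm_{K,S})$-factor collapses, via the long exact sequences, to $H^2(G,E^\grm_{K,S})$ modulo the image of $H^1(G,K^\times)=0$ and a coker/ker of maps into $H^2(G,K^\times)$ and $H^2(G,U^\grm_{K,S})$ — and here is where $\mathrm{Im}\{H^2(G,E^\grm_{K,S})\to H^2(G,K^\times)\}$ in the denominator appears (note $H^2(G,K^\grm)=H^2(G,K^\times)$ because $K^\grm$ and $K^\times$ differ by a cohomologically trivial-in-degree-$\geq1$ finite-index piece, or rather one argues directly with \eqref{eq: divides modulus}). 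Fourth, I would handle the $(\A^\times_K)^G/(K^\times U^\grm_{K,S})^G$-factor: $(\A^\times_K)^G=\A^\times_k$, and $(K^\times U^\grm_{K,S})^G$ contains $k^\times U^{\grm_r}_{k,S}$ with the discrepancy measured again by an $H^1$, so this factor is (up to that correction) $\A^\times_k/k^\times U^{\grm_r}_{k,S}\cong\Cl^{\grm_r}_{k,S}$; dividing produces the left-hand ratio. Finally, I would bookkeep the global index: the local $H^2$ factors $\prod_v G_v$ do not simply multiply to $[K:k]$ when $G$ is non-cyclic — one needs the fundamental exact sequence of class field theory $0\to H^2(G,K^\times)\to\bigoplus_v H^2(G_v,K_w^\times)\xrightarrow{\sum\mathrm{inv}_v}\mathbb{Q}/\mathbb{Z}$, i.e. the global Brauer-group sequence, to reconcile $\prod_v|G_v|$, $|H^2(G,K^\times)|$, and the surviving invariants; the $|H^1(G,K^\grm)|$ appearing in the numerator is precisely the correction term that separates $K^\grm$-cohomology from $K^\times$-cohomology and from the $S$-unit cohomology, and it is forced to be there by the exactness bookkeeping once everything else is pinned down.

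\textbf{The main obstacle.} The hard part is the cohomological bookkeeping in the non-cyclic case: in Theorem~\ref{CG} the group $G$ is cyclic, so Herbrand quotients and the identity $|M^G|=|M/M^{1-\sigma}|$ let one trade $H^1$ for $H^2$ freely and the global index theorem $|\A^\times_k/k^\times N\A^\times_K|=|G|$ does the assembly cleanly. Here, with $G$ arbitrary finite, one must carry both $H^1$ and $H^2$ of each of $E^\grm_{K,S}$, $K^\grm$, $U^\grm_{K,S}$ and $\A^\times_K$ through the braid of long exact sequences coming from \eqref{eq:A.1}, the product sequence for $K^\times U^\grm_{K,S}$, and $1\to K^\times U^\grm_{K,S}\to\A^\times_K\to\A^\times_K/K^\times U^\grm_{K,S}\to1$, and then invoke the global reciprocity (Brauer-group) sequence to eliminate $H^2(G,K^\times)$ in favour of the local invariants. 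Keeping track of exactly which cokernels survive — so that the final answer has precisely $|H^2(G,E^\grm_{K,S})|/|H^1(G,E^\grm_{K,S})|$, the factor $\prod_{v\notin S\cup S(\grm)}e_v$, the factor $|H^1(G,K^\grm)|$, and the denominator $|\mathrm{Im}\{H^2(G,E^\grm_{K,S})\to H^2(G,K^\grm)\}|$, and nothing else — is the delicate point. I expect the cleanest route is to first prove the ``unit-free'' statement relating $(\Cl^\grm_{K,S})^G$, $\Cl^{\grm_r}_{k,S}$, and the cohomology of $U^\grm_{K,S}$ and $K^\grm$ directly (this is where local CFT and the product formula for invariants enter), and only at the end feed in \eqref{eq:A.1} to rewrite the $K^\grm$- and idele-class contributions in terms of $H^\bullet(G,E^\grm_{K,S})$ via the standard exact sequence; that isolates the one genuinely global input (the Brauer sequence) from the purely formal homological algebra.
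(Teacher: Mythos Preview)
Your adelic plan is genuinely different from the paper's proof, and considerably harder than necessary. The paper does \emph{not} rerun the idele machine of Theorem~\ref{CG} here; it works entirely with ideal groups and elementary Galois cohomology, with no class field theory input at all. The key is the short exact sequence
\[
0 \to P^{\grm,S}_K \to I^{S(\grm)\cup S}_K \to \Cl^\grm_{K,S} \to 0
\]
together with \eqref{eq:A.1}. Since $I^{S(\grm)\cup S}_K$ is a permutation module, Shapiro gives $H^1(G,I^{S(\grm)\cup S}_K)=0$ and $(I^{S(\grm)\cup S}_K)^G/I^{S(\grm)\cup S}_k\cong\bigoplus_{v\notin S\cup S(\grm)}\Z/e_v\Z$; the snake lemma then expresses $|(\Cl^\grm_{K,S})^G|/|\Cl^{\grm_r}_{k,S}|$ in terms of $(P^{\grm,S}_K)^G/P^{\grm_r,S}_k$ and $H^1(G,P^{\grm,S}_K)$. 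These two are read off directly from the long exact sequence of \eqref{eq:A.1}: one gets $(P^{\grm,S}_K)^G/P^{\grm_r,S}_k\cong\ker\varphi$ and $|H^1(G,P^{\grm,S}_K)|=|H^1(G,K^\grm)|\cdot|\ker\psi|/|\mathrm{Im}\,\varphi|$, where $\varphi,\psi$ are the maps $H^i(G,E^\grm_{K,S})\to H^i(G,K^\grm)$ for $i=1,2$. Two lines of arithmetic finish.

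The payoff of this route is that by using $I^{S(\grm)\cup S}_K$ rather than $\A^\times_K$, the paper never sees the places in $S$: there are no $H^2(G_v,K_w^\times)$ factors for $v\in S$, no $[K:k]$, no global index theorem, no Brauer-group sequence. None of those appear in the stated formula, and in your approach they would have to cancel against pieces of $H^\bullet(G,E^\grm_{K,S})$ --- essentially forcing you through the Herbrand-quotient theorem for $S$-units, which is precisely the dependence Section~\ref{sec:A} is set up to avoid (see the paragraph before Theorem~\ref{A.1}). Your ``main obstacle'' is thus an artefact of the adelic packaging; the ideal-theoretic proof sidesteps it entirely. One concrete slip: your parenthetical claim that $H^2(G,K^\grm)=H^2(G,K^\times)$ is not correct in general --- the long exact sequence for $1\to K^\grm\to K^\times\to K^\times/K^\grm\to1$ shows the kernel of $H^2(G,K^\grm)\to H^2(G,K^\times)$ is $H^1(G,K^\times/K^\grm)$, which need not vanish.
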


When the support  $S(\grm)$ of $\grm$ is empty, 
the term $H^1(G,K^\grm)=H^1(G,K^\times)$ is trivial 
by Hilbert's Theorem 90.
For the general case, we have the following formula. 

\begin{proposition}\label{A.2}
Let the notation and the assumptions be the same as in
Theorem~\ref{A.1}. 
Then \[H^1(G, K^\grm)\cong \prod_{v\in  S(\grm_f)}
H^1(G, \prod_{w\mid v} 1+\grm \co_w). \] 
Furthermore, if the extension $K/k$ is cyclic, then \[|H^1(G, K^\grm)|= \prod_{v\in
S(\grm_f)} [1+\grm\co_v: N(\prod\limits_{w\mid 
v}(1+\grm\co_w))].\] 
\end{proposition}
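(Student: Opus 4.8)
The plan is to exploit the fact that $K^\grm$ differs from $K^\times$ only by local congruence conditions at the finite places dividing $\grm_f$ together with positivity conditions at the real places dividing $\grm_\infty$, and to use Hilbert's Theorem~90 ($H^1(G,K^\times)=0$) as the pivot. First I would sit inside the exact sequence of $G$-modules
\[
1 \longrightarrow K^\grm \longrightarrow K^\times \longrightarrow \prod_{w\mid \grm_f} K_w^\times/(1+\grm\co_w) \;\times\; \prod_{w\mid\grm_\infty} K_w^\times/(K_w^\times)^2 \longrightarrow 1,
\]
which is exact by weak approximation (this is the global-to-local surjectivity already used in the proof of Theorem~\ref{CG}). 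The associated long exact cohomology sequence, combined with $H^0(G,K^\times)\twoheadrightarrow H^0$ of the quotient (again weak approximation, since $k^\times$ surjects onto the product of local quotients over $k$), and with $H^1(G,K^\times)=0$, immediately yields
\[
H^1(G,K^\grm)\;\cong\; H^0\!\Big(G,\textstyle\prod_{w\mid\grm_f}K_w^\times/(1+\grm\co_w)\times\prod_{w\mid\grm_\infty}K_w^\times/(K_w^\times)^2\Big)\Big/\,\mathrm{Im}\,H^0(G,K^\times).
\]
I would prefer instead to break $K^\grm$ up in two stages — first mod out positivity at the $\grm_\infty$-places, then mod out the $\grm_f$-congruences — so that the real-place contribution can be handled separately; at a real place $w$ over $v$, either $v$ stays real (and the local factor is $G_v$-fixed with trivial $G_v$-action) or $v$ becomes complex (and the $G_v=\{1,\sigma\}$-cohomology of $\C^\times/(\C^\times)^2 = 1$ is trivial), so these places contribute nothing to $H^1$.

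The substantive computation is the $\grm_f$-part. Group the finite places $w\mid\grm_f$ according to the place $v$ of $k$ below them; by Shapiro's Lemma, $H^i(G,\prod_{w\mid v}(1+\grm\co_w))\cong H^i(G_v, 1+\grm\co_w)$ for a fixed choice of $w\mid v$, and likewise for $K_w^\times$. Since $1+\grm\co_w$ has finite index in $\co_w^\times$ which in turn has finite index in $K_w^\times$, and since $H^1(G_v,K_w^\times)=0$ by local Hilbert~90, the same long-exact-sequence maneuver locally gives $H^1(G_v,1+\grm\co_w)\cong k_v^\times/\big((1+\grm\co_v)\cdot N_{K_w/k_v}(K_w^\times)\big)$ — but I want to show more precisely that after also accounting for the unit/valuation structure the relevant group is exactly $H^1(G,\prod_{w\mid v}1+\grm\co_w)$, and that assembling these over all $v\in S(\grm_f)$ (the places with $G_v$ possibly nontrivial) gives the stated product, the places $v\notin S(\grm_f)$ contributing nothing because there $1+\grm\co_w=\co_w^\times$ locally, wait — more carefully, for $v\notin S(\grm_f)$ the local factor at $v$ in the exact sequence is trivial, so only $v\in S(\grm_f)$ survive, yielding the asserted isomorphism $H^1(G,K^\grm)\cong\prod_{v\in S(\grm_f)}H^1(G,\prod_{w\mid v}1+\grm\co_w)$.

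For the cyclic case, I would invoke that for a finite cyclic group and a finite $G_v$-module — and $1+\grm\co_w$ is not finite, but it is a profinite $G_v$-module with finite Herbrand quotient — periodicity of Tate cohomology gives $|H^1(G_v,1+\grm\co_w)|=|\hat H^{-1}(G_v,1+\grm\co_w)|\cdot h(1+\grm\co_w)^{\pm}$; more cleanly, I would use that $|H^1|/|H^0| = 1$ up to the Herbrand quotient and that the relevant $H^0$ is already identified with $(1+\grm\co_v)/N_{K_w/k_v}(1+\grm\co_w)$, but the cleanest route is: by periodicity $H^1(G_v,1+\grm\co_w)\cong H^{-1}(G_v,1+\grm\co_w) = {}_{N}(1+\grm\co_w)/(1+\grm\co_w)^{1-\sigma}$, and a direct order count — or better, using that $\co_w^\times$ and $1+\grm\co_w$ have equal Herbrand quotient $1$ over a local field, one reduces to the known equality $|H^1(G_v,1+\grm\co_w)| = |H^2(G_v,1+\grm\co_w)| = [1+\grm\co_v : N_{K_w/k_v}(1+\grm\co_w)]$, the last identification being exactly the local norm index computation already quoted in the proof of Theorem~\ref{CG} (via $H^2(G_v,\co_w^\times)\cong I_v$ refined to the congruence subgroup). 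Taking the product over $v\in S(\grm_f)$ and using Remark~\ref{rmk: after thm}(2) to write $N_{K_w/k_v}(1+\grm\co_w) = N(\prod_{w\mid v}(1+\grm\co_w))$ finishes the proposition. The main obstacle I anticipate is bookkeeping the passage from $H^1$ to a norm index for the \emph{infinite} (profinite, non-finite) module $1+\grm\co_w$: I will handle it by the Herbrand-quotient argument (the Herbrand quotient of $1+\grm\co_w$ as a $G_v$-module equals $1$, since it is commensurable with $\co_w^\times$ whose Herbrand quotient is $1$), which legitimizes the identity $|H^1(G_v,\cdot)| = |H^2(G_v,\cdot)|$ and hence reduces everything to the already-cited local class field theory computation.
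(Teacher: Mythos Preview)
Your strategy is essentially the paper's: use Hilbert~90 plus weak approximation to reduce $H^1(G,K^\grm)$ to a product of local pieces indexed by $v\mid\grm$, kill the archimedean contribution, apply Shapiro, and for the cyclic statement invoke that $1+\grm\co_w$ and $\co_w^\times$ share Herbrand quotient~$1$. The paper packages the first step more crisply by showing $H^1(G,K^\grm)\cong H^1(G,\A^\grm_K)$ directly (using both $H^1(G,K^\times)=0$ and $H^1(G,\A^\times_K)=0$, together with $K^\times/K^\grm\cong\A^\times_K/\A^\grm_K$), after which the decomposition $\A^\grm_K=\A^{S(\grm)}_K\times\prod_{w\mid\grm}(1+\grm\co_w)$ and the vanishing $H^1(G,\A^{S(\grm)}_K)=0$ give the product formula in one line; your cokernel formulation reaches the same conclusion but needs the extra check that the global cokernel splits as the product of local cokernels (which is fine, by weak approximation for $k$). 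One slip to fix: $\co_w^\times$ does \emph{not} have finite index in $K_w^\times$ (the quotient is $\Z$); this is irrelevant to the argument, but drop that clause. Your intermediate formula $H^1(G_v,1+\grm\co_w)\cong k_v^\times/\big((1+\grm\co_v)\cdot N_{K_w/k_v}(K_w^\times)\big)$ is also not what the long exact sequence gives and should be discarded --- you correctly abandon it anyway.
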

 
The following proposition relates $|\Cl^\grm_{k,S}|$ with $|\Cl^{\grm_r}_{k,S}|$. One can find a proof using the language of ideals in \cite[Chapter~V, Theorem 1.7]{milne:cft} for example; we shall present an adelic proof for the sake of completeness.
\begin{proposition}\label{A.3}
	Let the notation and the assumptions be the same as in
	Theorem~\ref{A.1}. Then 
	
	$(1)$ $|\Cl^\grm_{k,S}|=|\Cl_{k,S}|\cdot [E_{k,S}:E^\grm_{k,S}]^{-1} \cdot 2^{|S(\grm_\infty)|}\cdot \left|  (\co_{k,S}/\grm_f\co_{k,S})^\times \right|$;
	
	$(2)$
	$|\Cl^\grm_{k,S}|=|\Cl^{\grm_r}_{k,S}|\cdot [E^{\grm_r}_{k,S}:E^\grm_{k,S}]^{-1} \cdot 2^{|S(\grm^c_\infty)|}. $
\end{proposition}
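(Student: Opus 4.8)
The plan is to deduce both identities from the adelic description of the $S$-ray class group, together with an elementary index computation at the places dividing $\grm$. Recall from the proof of Theorem~\ref{CG} that for any global field $F$ one has $\Cl^\grm_{F,S}\cong\A^\times_F/F^\times U^\grm_{F,S}$; in particular $\Cl^\grm_{k,S}\cong\A^\times_k/k^\times U^\grm_{k,S}$, and taking $\grm=1$ gives $\Cl_{k,S}\cong\A^\times_k/k^\times U_{k,S}$ with $U_{k,S}:=U^{1}_{k,S}=\prod_{w\notin S}\co^\times_w\prod_{w\in S}k^\times_w$. Since $1+\grm\co_w\subset\co^\times_w$ for $w\mid\grm_f$ and $1+\grm\co_w=\R_{>0}\subset k_w^\times$ for $w\mid\grm_\infty$, we have $U^\grm_{k,S}\subset U_{k,S}$, hence a surjection $\Cl^\grm_{k,S}\onto\Cl_{k,S}$ with kernel $k^\times U_{k,S}/k^\times U^\grm_{k,S}$.

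For part (1), the first step is to identify this kernel with $U_{k,S}/E_{k,S}U^\grm_{k,S}$ via the second isomorphism theorem, using $U_{k,S}\cap k^\times=E_{k,S}$ (the group of $S$-units) exactly as in the verification of \eqref{eq: CG5.5}; thus $|\Cl^\grm_{k,S}|=|\Cl_{k,S}|\cdot[U_{k,S}:E_{k,S}U^\grm_{k,S}]$. Next, since $E_{k,S}\cap U^\grm_{k,S}=E^\grm_{k,S}$ by definition of $E^\grm_{k,S}$, we get $[E_{k,S}U^\grm_{k,S}:U^\grm_{k,S}]=[E_{k,S}:E^\grm_{k,S}]$, so it remains to compute the purely local index $[U_{k,S}:U^\grm_{k,S}]$, which factors as $2^{|S(\grm_\infty)|}$ (from $\R^\times/\R_{>0}$ at the real places $w\mid\grm_\infty$) times $\prod_{w\mid\grm_f}[\co^\times_w:1+\grm_f\co_w]$; the latter product equals $|(\co_{k,S}/\grm_f\co_{k,S})^\times|$ by the Chinese Remainder Theorem together with the isomorphism $(\co_w/\grm_f\co_w)^\times\cong\co^\times_w/(1+\grm_f\co_w)$. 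Combining the three displayed indices gives (1).

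For part (2), the same mechanism applies with $U^{\grm_r}_{k,S}$ in place of $U_{k,S}$: since $\grm_r=\grm_f\grm^r_\infty$ divides $\grm=\grm_f\grm^r_\infty\grm^c_\infty$ and the two moduli have the same finite part, $U^\grm_{k,S}\subset U^{\grm_r}_{k,S}$ with quotient supported only at the real places $v\mid\grm^c_\infty$, where it is $\R^\times/\R_{>0}$, so $[U^{\grm_r}_{k,S}:U^\grm_{k,S}]=2^{|S(\grm^c_\infty)|}$; combined with $U^{\grm_r}_{k,S}\cap k^\times=E^{\grm_r}_{k,S}$ and $E^{\grm_r}_{k,S}\cap U^\grm_{k,S}=E^\grm_{k,S}$ this yields (2) in the same way. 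Alternatively, (2) follows formally by dividing identity (1) for $\grm$ by identity (1) for $\grm_r$, using $|S(\grm_\infty)|=|S(\grm^r_\infty)|+|S(\grm^c_\infty)|$ and $[E_{k,S}:E^\grm_{k,S}]=[E_{k,S}:E^{\grm_r}_{k,S}][E^{\grm_r}_{k,S}:E^\grm_{k,S}]$. I do not expect a genuine obstacle here; the one point requiring care is the bookkeeping at the archimedean places — in particular the conventions $\co_w=k_w$ and $1+\grm\co_w=(k_w^\times)^2$ for $w\mid\grm_\infty$ — and the three intersections with $k^\times$, after which everything reduces to the Chinese Remainder Theorem and finite-index arithmetic, all groups in sight being finite since $S\neq\emptyset$.
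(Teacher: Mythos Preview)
Your proof is correct and follows essentially the same route as the paper: both start from the adelic description $\Cl^\grm_{k,S}\cong\A^\times_k/k^\times U^\grm_{k,S}$, compute $|k^\times U_{k,S}/k^\times U^\grm_{k,S}|$ by splitting off the local index $[U_{k,S}:U^\grm_{k,S}]$ (evaluated via the Chinese Remainder Theorem) and the unit contribution $[E_{k,S}:E^\grm_{k,S}]$, and then deduce (2) by applying (1) to $\grm$ and $\grm_r$. The only cosmetic difference is that the paper packages the key intersection step as the abstract identity $B\cap A/B\cap C\cong A\cap BC/C$ (applied to $A=U_{k,S}$, $B=k^\times$, $C=U^\grm_{k,S}$), whereas you invoke the second isomorphism theorem together with $U_{k,S}\cap k^\times=E_{k,S}$; note that strictly speaking you need the one-line consequence $U_{k,S}\cap k^\times U^\grm_{k,S}=E_{k,S}U^\grm_{k,S}$, which is exactly what the paper's lemma gives.
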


\begin{lemma}\label{A.lem}
We have

$(1)$ $H^1(G,I^S_K)=0$ and  $({I^S_K})^G/{I^S_k} \cong  \oplus_{v\notin S}\Z/{e_v\Z}$;

$(2)$  $(P^{\grm,S}_K)^G/P^{\grm_r,S}_k \cong \Ker \varphi$, where $\varphi$ is the natural map $H^1(G,E^\grm_{K,S}) \rightarrow H^1(G,K^\grm)$; 

$(3)$ $|H^1(G,P^{\grm,S}_K)|=|H^1(G,K^\grm)|\cdot |\Ker \psi|\cdot
|\mathrm{Im}\text{ } \varphi|^{-1}$, where $\psi$ is the map
$H^2(G,E^\grm_{K,S}) \rightarrow H^2(G,K^\grm)$.  
\end{lemma}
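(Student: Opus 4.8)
The three statements all flow from the long exact cohomology sequences attached to the two short exact sequences of $G$-modules already available in the excerpt, namely
\[
1 \rightarrow E^{\grm}_{K,S} \rightarrow K^\grm \xrightarrow{i_S} P^{\grm,S}_{K} \rightarrow 1
\]
from \eqref{eq:A.1}, and the tautological sequence
\[
1 \rightarrow P^{\grm,S}_{K} \rightarrow I^{S(\grm)\cup S}_{K} \rightarrow \Cl^\grm_{K,S} \rightarrow 1,
\]
together with the structure of $I^S_K$ as a permutation module. For part~(1), I would decompose $I^S_K = \bigoplus_{v \notin S} \mathrm{Ind}_{G_v}^{G} \Z$ over the places $v$ of $k$ outside $S$ (choosing a place $w \mid v$ in each orbit), so that by Shapiro's lemma $H^1(G, I^S_K) \cong \bigoplus_v H^1(G_v, \Z) = 0$ since $H^1$ of $\Z$ with trivial action vanishes. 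For the cokernel statement, taking $G$-invariants of $\mathrm{Ind}_{G_v}^G \Z$ gives the sublattice of $G_v$-invariant divisors, and the quotient $(\mathrm{Ind}_{G_v}^G\Z)^G / \Z\cdot v$ (where $v$ is the norm of the generator $w$) is cyclic of order $[G_v : I_v] \cdot |I_v| / f_v = e_v$ — more precisely $N_{K/k}(w) = f_v \cdot v$ in divisor terms but the relevant index of the invariants over the image of $I^S_k$ computes to $e_v$; I will need to track the conventions in Section~\ref{sec:CG.2} (including the archimedean convention $e_v = 2$ for a real place going complex) to get this exactly right.

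For part~(2), I would apply the long exact sequence in cohomology to \eqref{eq:A.1}. Since $(K^\grm)^G = k^{\grm_r}$ and $(E^\grm_{K,S})^G = E^{\grm_r}_{k,S}$ by \eqref{eq: G-inv-modulus}, the beginning of the sequence reads
\[
1 \rightarrow E^{\grm_r}_{k,S} \rightarrow k^{\grm_r} \xrightarrow{i_S} (P^{\grm,S}_K)^G \rightarrow H^1(G, E^\grm_{K,S}) \xrightarrow{\varphi} H^1(G, K^\grm).
\]
The image of $k^{\grm_r}$ under $i_S$ is exactly $P^{\grm_r,S}_k$ (by definition of the latter as principal $S$-ideals modulo $\grm_r$), so exactness identifies $(P^{\grm,S}_K)^G / P^{\grm_r,S}_k$ with the kernel of $\varphi$, which is the claim. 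Here I should be slightly careful that the arrow $H^1(G,E^\grm_{K,S})\to H^1(G,K^\grm)$ appearing as the connecting-map target is literally the map $\varphi$ named in the statement, which is immediate from functoriality.

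For part~(3), I would continue the same long exact sequence one more step and also bring in part~(1). From \eqref{eq:A.1} the relevant segment is
\[
H^1(G, K^\grm) \rightarrow H^1(G, P^{\grm,S}_K) \rightarrow H^2(G, E^\grm_{K,S}) \xrightarrow{\psi} H^2(G, K^\grm),
\]
so $H^1(G, P^{\grm,S}_K)$ sits in an extension of $\Ker\psi$ by $\mathrm{coker}\{H^1(G,K^\grm)\to H^1(G,P^{\grm,S}_K)\}$, and the cokernel of $\varphi$ controls the left-hand map: exactness gives $|H^1(G,P^{\grm,S}_K)| = |\Ker\psi| \cdot |H^1(G,K^\grm)| / |\mathrm{Im}\,\varphi|$, which is the stated formula (note $|H^1(G,K^\grm)|/|\mathrm{Im}\,\varphi| = |\mathrm{coker}(\varphi)|$ because the image of $H^1(G,K^\grm)\to H^1(G,P^{\grm,S}_K)$ is identified with $\mathrm{coker}(\varphi)$ via the connecting homomorphism). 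The main obstacle I anticipate is purely bookkeeping in part~(1): getting the order of $(I^S_K)^G/I^S_k$ to come out as exactly $\prod_{v\notin S}\Z/e_v\Z$ requires carefully matching the ramification-index conventions (especially the archimedean $e_v=2$ convention and the fact that $S$ may or may not contain a given infinite place) rather than any deep input; parts~(2) and~(3) are then formal consequences of the long exact sequence once~(1) and \eqref{eq: G-inv-modulus} are in hand.
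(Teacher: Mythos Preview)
Your proposal is correct and follows essentially the same route as the paper: the induced-module decomposition plus Shapiro's lemma for part~(1), and the long exact sequence attached to \eqref{eq:A.1} for parts~(2) and~(3). Your only unnecessary worry is the archimedean bookkeeping in part~(1) --- since $S \supset V_{k,\infty}$ by hypothesis, no infinite places appear in $I^S_K$ at all, and the quotient $(I^S_K)^G/I^S_k$ is computed purely from the finite-place relation $\grp_v\mathcal{O}_K=\gra_v^{e_v}$ (where $\gra_v=\prod_{\grP\mid\grp_v}\grP$), exactly as the paper does.
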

\begin{proof}
(1) For each place $v$ of $k$, let $G_v$ be a decomposition group of $v$,
which is uniquely determined up to conjugate.  
Since $G$ acts transitively on the set of places of $K$ above $v$,  
$I^S_K\cong \oplus_{v\notin S} \oplus_{w\mid v} \Z w =
\oplus_{v\notin S} \mathrm{Ind}^{G}_{G_v}\Z $. By  Shapiro's Lemma,
$H^1(G,I^S_K) =\oplus_{v\notin S}H^1(G_v,\Z)=0$.

For each finite place $v$ of $k$, let $\grp_v$ be the corresponding 
prime ideal of $\calO_k$, and $\gra_v:=\prod_{\grP| \grp_v} \grP$ the
prime ideal of $\calO_K$ such that $\gra_v^{e_v}=\grp_v \calO_K$. It is
clear that $(I^S_K)^G$ and $I_k^S$ are free abelian groups generated by
$\gra_v$ and $\grp_v$ for all $v\not\in S$, respectively. Thus,
$(I^S_K)^G/{I^S_k}\cong \oplus _{v\notin S}\Z/{e_v\Z}$.

(2) Taking Galois cohomology of the exact sequence \eqref{eq:A.1},
    we get the long exact sequence 
\[ 1 \rightarrow (E^\grm_{K,S})^G \rightarrow (K^\grm)^G \rightarrow 
(P^{\grm,S}_{K})^G \rightarrow H^1(G, E^\grm_{K,S}) \xrightarrow []{\varphi}    H^1(G,K^\grm). \]
We have $(K^\grm)^G=k^{\grm_r}$.
It follows that  $(P^{\grm,S}_K)^G/P^{\grm_r,S}_k \cong \Ker \varphi$.

(3) Taking Galois cohomology of the exact sequence \eqref{eq:A.1},
    we get the long exact sequence 
\[ H^1(G, E^\grm_{K,S}) \xrightarrow []{\varphi} 
    H^1(G,K^\grm) \rightarrow H^1(G,K^\grm) 
   \rightarrow H^1(G,P^{\grm,S}_{K}) \rightarrow H^2(G,
   E^\grm_{K,S}) \xrightarrow{\psi} H^2(G,K^\grm)
   \]
and an exact sequence
\[ 0 \rightarrow {\rm Im} \varphi \rightarrow H^1(G,K^\grm) 
   \rightarrow H^1(G,P^{\grm,S}_{K}) \rightarrow \Ker {\psi}
   \rightarrow 0.  \]
From this the statement (3) follows.
\end{proof}

\begin{proof}[Proof of Theorem~\ref{A.1}]
Consider the exact sequence  of $G$-modules 
\[0\rightarrow P^{\grm,S}_K \rightarrow I^{S(\grm)\cup S}_K\rightarrow
\Cl^\grm_{K,S}\rightarrow 0.\] 
Taking  Galois cohomology, we obtain the following exact commutative diagram
\[ \begin{xymatrix}{
         0  \ar@{->}[r] & P^{\grm_r,S}_k \ar@{->}[r] \ar@{->}[d]
          & I^{S(\grm)\cup S}_k\ar@{->}[r]\ar@{->}[d] & \Cl^{\grm_r}_{k,S}
          \ar@{->}[r] \ar@{->}^-{j}[d] & 0  & \\ 
          0 \ar@{->}[r]  & (P^{\grm,S}_K)^G \ar@{->}[r]  &
     (I^{S(\grm)\cup S}_K)^G \ar@{->}[r]  & (\Cl^\grm_{K,S})^G\ar@{->}[r]  &
              H^1(G,P^{\grm,S}_K) \ar@{->}[r]  &0     \\    
        }\end{xymatrix} \]
We remark that the map $j$ is not injective in general. 
The snake lemma gives the exact sequence
\[
0\rightarrow \Ker j \rightarrow
(P^{\grm,S}_K)^G/P^{\grm_r,S}_k \rightarrow  (I^{S(\grm)\cup S}_K)^G /{I^{S(\grm)\cup S}_k}
\rightarrow (\Cl^\grm_{K,S})^G/{\mathrm{Im}\text{ } j}\rightarrow H^1(G,P^{\grm,S}_K)
\rightarrow 0.  \]
By Lemma~\ref{A.lem}, we have
\[ \begin{split} 
|(\Cl^\grm_{K,S})^G/{\Cl^{\grm_r}_{k,S}}| &
=|(\Cl^\grm_{K,S})^G/{\mathrm{Im}\text{ } j}|\cdot |\Ker j|^{-1}  \\  
& = \frac{
 |H^1(G,K^\grm)|\cdot |\Ker \psi|\cdot |\mathrm{Im}\text{ } \varphi|^{-1}}{|\Ker
 \varphi|} \cdot 
\prod_{v\notin S(\grm)\cup S}e_v   \\ 
&= \frac{|H^1(G,K^\grm)|\cdot |\Ker \psi|} {|H^1(G,E^\grm_{K,S})|}
\cdot \prod_{v\notin S(\grm)\cup S}e_v  \\
& = \frac{|H^1(G,K^\grm)|\cdot |H^2(G,E^\grm_{K,S})|}
{|H^1(G,E^\grm_{K,S})| |\mathrm{Im}\text{ } \psi |} 
\cdot \prod_{v\notin
  S(\grm)\cup S}e_v. \quad \text{ } \end{split}  \]  
This completes the proof of Theorem~\ref{A.1}.
\end{proof}

\begin{proof}[Proof of Proposition~\ref{A.2}]
 The facts  $H^1(G, \A^\times_K)=H^1(G, K^\times)=0$ and
 $(\A^\times_K)^G=\A^\times_k$ will be used. 
Taking Galois cohomology of the short exact sequence 
\[ 1\rightarrow K^\grm\rightarrow K^\times\rightarrow
 K^\times/K^\grm\rightarrow 1,\] we get the exact sequence
\[1\rightarrow k^\times/k^{\grm_r} \rightarrow (K^\times/K^\grm)^G \rightarrow H^1(G,K^\grm)\rightarrow 1.\]
Recall that $\A^\grm_K=\{(a_w)_w \in \A^\times_K| a_w
\in 1+\grm \calO_w 
\text{ for } w\mid \grm \}$.  We have  $K^\grm=K^\times \cap
\A^\grm_K$ and  $\A^\times_K=K^\times \A^\grm_K$ by the weak approximation
theorem.  
This gives a natural isomorphism \[ K^\times/K^\grm \cong
\A^\times_K/\A^\grm_K.   \] 
Taking Galois cohomology of 
$1\rightarrow \A^\grm_K\rightarrow \A^\times_K
\rightarrow \A^\times_K/\A^\grm_K\rightarrow 1$, we get the exact
sequence 
\[ 1\rightarrow \A^\times_k/{\A^{\grm_r}_k} \rightarrow
(\A^\times_K/{\A^\grm_K})^G \rightarrow H^1(G, \A^\grm_K) \rightarrow
1.   \] 
This implies that 
\[ H^1(G, K^\grm) \cong H^1(G, \A^\grm_K).\] 
(Note
that this isomorphism can also be deduced from taking Galois cohomology of
the short exact sequence 
$1\rightarrow K^\grm \rightarrow \A^\grm_K\rightarrow
\A^\grm_K/K^\grm={\A^\times_K}/K^\times \rightarrow 1$ 
and using the facts that 
$({\A^\times_K}/K^\times)^G={\A^\times_k}/k^\times$ and
$H^1(G, {\A^\times_K}/K^\times)=0$.) 
Let $\A^{S(\grm)}_K$ be the subgroup of $\A^\times_K$ such that
\[\A^\times_K=\A^{S(\grm)}_K   \times   \prod_{w\mid
  \grm}K^\times_w \quad \text{ as a direct product}.\] 
We have  \[\A^\grm_K =  \A^{S(\grm)}_K \times
\prod_{w\mid \grm}1+\grm\co_w   \quad \text{      and    }  \quad
H^1(G,\A^{S(\grm)}_K )=0.\]  
It follows that
\[ H^1(G, \A^\grm_K) \cong \prod_{v\mid \grm} H^1(G, \prod_{w|v}1+\grm \co_w).   \] 
Observe that $H^1(G, \prod_{w\mid v}1+\grm \co_w) \cong H^1(G_v, 1+\grm
\co_{w_1})$ by Shapiro's Lemma,  where $w_1$ is a place of $K$ over $v$ and
$G_v=\Gal(K_{w_1}/{k_v})$. Also note that $H^1(G_v, 1+\grm\co_w)=0$ when $v$ is real.
This proves the first part of Proposition~\ref{A.2}. 

Assume that $K/k$ is cyclic. For a finite place $v$  of $k$, the
Herbrand quotient of the $\Gal(K_w/k_v)$-module $\co^\times_w$ is $1$; see \cite[Chapter IX, \S3, Lemma 4]{lang:ant}. Note that  $1+\grm\co_w$ has
finite index in $\co^\times_w$.   We then have \[|H^1(G_v, 1+\grm \co_w
)|=|H^2(G_v, 1+\grm \co_w)|=[1+\grm \co_v: N_{K_w/k_v}(1+\grm \co_w)
].\] 
This completes the proof of Proposition~\ref{A.2}.   
\end{proof} 

\begin{proof}[Proof of Proposition~\ref{A.3}]
	As in the proof of Theorem~\ref{CG}, we have
	\[  \Cl^\grm_{k,S} \cong \A^\times_k/k^\times U^\grm_{k,S},  \]
	where 
	\[U^{\grm}_{k,S}=\prod_{v\nmid \grm, v\not\in S} 
	\co^\times_v \prod_{v\mid \grm}
	1+\grm\co_v \prod_{v\in S,
		v\nmid \grm_\infty} k^\times_v.\] 
	It follows that \[   \frac{|\Cl^\grm_{k,S}|}{|\Cl_{k,S}|}=| k^\times U_{k,S}/k^\times U^{\grm}_{k,S}|. \]
	Consider the exact sequence 
	\begin{equation}\label{eq: exact}  1\rightarrow U_{k,S}\cap k^\times U^{\grm}_{k,S}/U^{\grm}_{k,S}\rightarrow    U_{k,S}/U^{\grm}_{k,S}   \rightarrow k^\times U_{k,S}/k^\times U^{\grm}_{k,S}\rightarrow 1.    \end{equation}
	Clearly the middle term has order 
	\[ 2^{|S(\grm_\infty)|}\cdot\prod_{v\mid \grm_f}|(\co^\times_v/{1+\grm\co_v})|= 2^{|S(\grm_\infty)|}\cdot\prod_{v\mid \grm_f}|(\co_v/\grm\co_v)^\times|=2^{|S(\grm_\infty)|}\cdot |  (\co_{k,S}/\grm_f\co_{k,S})^\times |.  \]
	The last equality follows from the Chinese remainder theorem. 
	
	Suppose $A,B$ and $C$ are subgroups of some abelian group (written multiplicatively) such that $C\subset A$. Then it is direct to check that the natural map $B\cap A \hookrightarrow A\cap BC$ induces an isomorphism
	\[  B\cap A/B\cap C \cong A\cap BC/C.  \]
	Applying this to $A=U_{k,S}, B=k^\times$ and $C=U^\grm_{k,S}$ shows that the first term of \eqref{eq: exact} is isomorphic to $E_{k,S}/{E^\grm_{k,S}}$. This proves formula (1).
	
	Formula (2) follows from applying (1) to the modulus $\grm$ and $\grm_r$ respectively. \end{proof}

\begin{remark}
Suppose that $K/k$ is cyclic. 
We can identify $H^2(G, M)$ with
the Tate cohomology $\wh{H}^{2}(G,M)\cong \wh{H}^{0}(G,M)$ by
periodicity for any $G$-module $M$.  
The theorem on the Herbrand quotient of global units says 
(see \cite[Chapter IX, \S4, Corollary 2]{lang:ant}) 
\[  \frac{|H^2(G,E_{K,S})|}{|H^1(G,E_{K,S})
    |}=\frac{\prod_{v\in S}|G_v|}{[K:k]}.  \]
Here $G_v$ is the decomposition group of $v$ and $E_{K,S}$ is the
 group of $S$-units of $K$.  Note that $E^\grm_{K,S}$  has finite index in
 $E_{K,S}$. So  they have the same Herbrand quotient.   Since 
 $H^2(G, E^\grm_{K,S})=E^{\grm_r}_{k,S}/N(E^\grm_{K,S})$
and $H^2(G,K^\grm)=k^{\grm_r}/N(K^\grm)$, we have 
\[|\mathrm{Im}\text{ }\psi
|=|E^{\grm_r}_{k,S}N(K^\grm)/N(K^\grm)|=[E^{\grm_r}_{k,S}:
E^{\grm_r}_{k,S}\cap N(K^\grm)]. \] 
Thus, by Theorem~\ref{A.1} and 
Proposition~\ref{A.2},   
we obtain the ambiguous class number formula for $S$-ray class groups  
\[   \frac{|(\Cl^\grm_{K,S})^G|}{|\Cl^{\grm_r}_{k,S}|} =
\frac{\prod\limits_{v\in S}e_v f_v \cdot \prod\limits_{v\in S(\grm_f)}
  [1+\grm\co_v: N(\prod\limits_{w\mid 
                v}(1+\grm\co_w))] \cdot \prod\limits_{v\notin
  S(\grm)\cup S}e_v    }{[K:k] [E^{\grm_r}_{k,S}:       E^{\grm_r}_{k,S}\cap
  N(K^\grm)]}.  \] 
To compare this formula to Theorem~\ref{CG}, we first note that $E^{\grm_r}_{k,S}\cap N(K^\grm)=E^{\grm}_{k,S}\cap N(K^\grm)$. Then by Proposition~\ref{A.3}(2), the above formula gives 
\[ \frac{|(\Cl^\grm_{K,S})^G|}{|\Cl^{\grm}_{k,S}|} = \frac{\prod\limits_{v\in S\setminus S(\grm_\infty)}e_v f_v \cdot \prod\limits_{v\in S(\grm_f)}
	[1+\grm\co_v: N(\prod\limits_{w\mid 
		v}(1+\grm\co_w))] \cdot \prod\limits_{v\notin
		S(\grm)\cup S}e_v    }{[K:k] [E^{\grm}_{k,S}:       E^{\grm}_{k,S}\cap
	N(K^\grm)]}. \]
This is the formula in Theorem~\ref{CG} when $\mathcal{C}=0$.
\end{remark}

\subsection*{Acknowledgments.} We thank Professor Georges Gras for his helpful comments and 
	Professor Christian Maire for informing us about his thesis \cite{maire:thesis}, which is related to our Section 4.
	Li is supported  by Anhui Initiative in Quantum Information Technologies
	(Grant No.~AHY150200), NSFC (Grant No.~11571328) and the Fundamental
	Research Funds for the Central Universities (No. WK0010000058). 
	Yu was partially supported by the grants 
	MoST 103-2918-I-001-009 and 104-2115-M-001-001-MY3. We are grateful to
	the anonymous referee for his/her helpful comments and encouragement to
	extend the previous results of Section 4 to $S$-ray class groups; 
	these improve the paper significantly.

\bigskip

\end{document}